
\documentclass{article}

\usepackage{amsmath,amssymb,bbm}
\usepackage{color}
\usepackage{mathtools}

\DeclareMathOperator*{\R}{\mathbb{R}}
\DeclareMathOperator*{\E}{\mathbb{E}}

\DeclareMathOperator*{\Rd}{\mathbb{R}^d}

\newcommand{\vz}{\mathbf{0}}


\usepackage{microtype}
\usepackage{graphicx}
\usepackage{subfigure}
\usepackage{booktabs} 
\graphicspath{{image/}}

\usepackage{url}

\usepackage{hyperref}


\usepackage{algorithm, algorithmic}
\makeatletter
\newcounter{procedure}
\newenvironment{procedure}[1][htb]{%
    \let\c@algorithm\c@procedure
    \renewcommand{\ALG@name}{Procedure}
   \begin{algorithm}[#1]%
  }{\end{algorithm}}
\makeatother


\usepackage[accepted]{icml2019}

\usepackage{etoolbox}
\makeatletter
\patchcmd\@combinedblfloats{\box\@outputbox}{\unvbox\@outputbox}{}{%
   \errmessage{\noexpand\@combinedblfloats could not be patched}%
}%
 \makeatother

\icmltitlerunning{Acceleration of SVRG and Katyusha X by Inexact Preconditioning}


\usepackage{mathtools}
\usepackage[normalem]{ulem} 

\usepackage{amsmath}
\usepackage{amssymb}
\usepackage{mathtools}
\usepackage{mathrsfs}


\newcommand{\cut}[1]{{}}

\newcommand{\RR}{\mathbb{R}}				


\newcommand{\dom}[1]{{\mathrm{dom}(#1)}} 	


\newcommand{\prox}{\mathbf{prox}}

\DeclareMathOperator*{\argmin}{arg\,min}


\newcommand{\DeclareAutoPairedDelimiter}[3]{%
	\expandafter\DeclarePairedDelimiter\csname Auto\string#1\endcsname{#2}{#3}%
	\begingroup\edef\x{\endgroup
		\noexpand\DeclareRobustCommand{\noexpand#1}{%
			\expandafter\noexpand\csname Auto\string#1\endcsname*}}%
	\x}

\DeclareAutoPairedDelimiter{\p}{(}{)} 					
\DeclareAutoPairedDelimiter{\sp}{[}{]} 					
\DeclareAutoPairedDelimiter{\abs}{|}{|} 					
\DeclareAutoPairedDelimiter{\cp}{\{}{\}} 				
\DeclareAutoPairedDelimiter{\dotp}{\langle}{\rangle} 	
\DeclareAutoPairedDelimiter{\n}{\Vert}{\Vert} 			
\DeclareAutoPairedDelimiter{\cl}{\lceil}{\rceil}

\newcommand{\cO}{{\mathcal{O}}}




\usepackage{amsmath}
\usepackage{amsthm}
\usepackage{amssymb}
\usepackage{mathtools}
\usepackage{mathrsfs}

\usepackage[nameinlink]{cleveref}


\newcommand{\bc}{\begin{center}}
\newcommand{\ec}{\end{center}}

\newcommand{\bdm}{\begin{displaymath}}
\newcommand{\edm}{\end{displaymath}}

\newcommand{\beq}{\begin{equation}}
\newcommand{\eeq}{\end{equation}}

\newcommand{\bfl}{\begin{flushleft}}
\newcommand{\efl}{\end{flushleft}}

\newcommand{\bt}{\begin{tabbing}}
\newcommand{\et}{\end{tabbing}}

\newcommand{\beqn}{\begin{align}}
\newcommand{\eeqn}{\end{align}}

\newcommand{\beqs}{\begin{align*}} 
\newcommand{\eeqs}{\end{align*}}  


\newtheoremstyle{Fancyplain}
{\topsep}   
{\topsep}   
{\itshape}  
{0pt}       
{\bfseries} 
{}         
{5pt plus 1pt minus 1pt} 
{\thmname{#1} \thmnumber{#2}. \thmnote{\normalfont\bfseries#3.}}

\theoremstyle{Fancyplain}
\newtheorem{thm}{Theorem}
\newtheorem{lem}{Lemma}

\crefname{thm}{Thm.}{Thms.}
\Crefname{thm}{Theorem}{Theorems}
\crefname{lem}{Lem.}{Lems.}
\Crefname{lem}{Lemma}{Lemmas}
\crefname{cor}{Cor.}{Cors.}
\Crefname{cor}{Corollary}{Corollaries}
\crefname{prop}{Prop.}{Props.}
\Crefname{prop}{Proposition}{Propositions}

\newtheoremstyle{Fancydefinition}
{\topsep}   
{\topsep}   
{\normalfont}  
{0pt}       
{\bfseries} 
{}         
{5pt plus 1pt minus 1pt} 
{\thmname{#1} \thmnumber{#2}. \thmnote{\normalfont\bfseries#3.}}

\theoremstyle{Fancydefinition}
\newtheorem{defn}{Definition}

\newtheorem{rem}{Remark}
\newtheorem{asmp}{Assumption}

\crefname{defn}{Defn.}{Defns.}
\Crefname{defn}{Definition}{Definitions}
\crefname{example}{Ex.}{Exs.}
\Crefname{example}{Example}{Examples}
\crefname{xca}{Ex.}{Exs.}
\Crefname{xca}{Exercise}{Exercises}
\crefname{rem}{Rem.}{Rems.}
\Crefname{rem}{Remark}{Remarks}
\crefname{asmp}{Asmp.}{Asmps.}
\Crefname{asmp}{Assumption}{Assumptions}
\crefname{section}{Sec.}{Secs.}
\Crefname{section}{Section}{Sections}

\numberwithin{equation}{section}
\numberwithin{figure}{section}

\begin{document}

\twocolumn[
\icmltitle{Acceleration of SVRG and Katyusha X by Inexact Preconditioning}




\begin{icmlauthorlist}
\icmlauthor{Yanli Liu}{LA}
\icmlauthor{Fei Feng}{LA}
\icmlauthor{Wotao Yin}{LA}
\end{icmlauthorlist}

\icmlaffiliation{LA}{Department of Mathematics, University of California, Los Angeles, Los Angeles, CA, USA}
\icmlaffiliation{LA}{Department of Mathematics, University of California, Los Angeles, Los Angeles, CA, USA}
\icmlaffiliation{LA}{Department of Mathematics, University of California, Los Angeles, Los Angeles, CA, USA}

\icmlcorrespondingauthor{Yanli Liu}{yanli@math.ucla.edu}

\icmlkeywords{SVRG, Katyusha X, inexact preconditioning}

\vskip 0.3in
]



\printAffiliationsAndNotice{} 

\begin{abstract}
Empirical risk minimization is an important class of optimization problems with many popular machine learning applications, and stochastic variance reduction methods are popular choices for solving them. Among these methods, SVRG and Katyusha X (a Nesterov accelerated SVRG) achieve fast convergence without substantial memory requirement. In this paper, we propose to accelerate these two algorithms by \textit{inexact preconditioning}, the proposed methods employ \textit{fixed} preconditioners, although the subproblem in each epoch becomes harder, it suffices to apply \textit{fixed} number of simple subroutines to solve it inexactly, without losing the overall convergence. As a result, this inexact preconditioning strategy gives provably better iteration complexity and gradient complexity over SVRG and Katyusha X. We also allow each function in the finite sum to be nonconvex while the sum is strongly convex. In our numerical experiments, we observe an on average $8\times$ speedup on the number of iterations and $7\times$ speedup on runtime.

\end{abstract}

\section{Introduction}
\label{sec: introduction}
Empirical risk minimization is an important class of optimization problems that has many applications in machine learning, especially in the large-scale setting. In this paper, we formulate it as the minimization of the following objective
\begin{align}
\label{equ: objective}
F(x)= f(x)+\psi(x)=\frac{1}{n}\sum_{i=1}^n f_i(x)+\psi(x),
\end{align}
where the finite sum $f(x)$ is strongly convex, each $f_i(x)$ in the finite sum is smooth\footnotemark[1] and can be nonconvex, and the regularizer $\psi(x)$ is proper, closed, and convex, but may be nonsmooth. A nonzero $\psi(x)$ is desirable in many applications, for example, $\ell_1-$ regularization that induces sparsity in the solution. Allowing $f_i$ to be nonconvex is also necessary in some applications, e.g.,  shift-and-invert approach to solve PCA \cite{saad1992numerical}. 
\footnotetext[1]{A function $f$ is said to be smooth if its gradient $\nabla f$ is Lipschitz continuous.}
\subsection{Related Work}
To obtain a high quality approximate solution $\hat{x}$ of \eqref{equ: objective}, 
stochastic variance reduction algorithms are a class of preferable choices in the large scale setting where $n$ is huge. If each $f_i$ is $\sigma-$strongly convex and $L-$smooth, and $\psi=0$, then SVRG \cite{JohnsonZhang2013_accelerating}, SAGA \cite{DefazioBachLacoste-Julien2014_saga}, SAG \cite{RouxSchmidtBach2012_stochastic}, SARAH \cite{nguyen2017sarah}, SDCA \cite{Shalev-ShwartzZhang2013_stochastic}, SDCA  without duality \cite{shalev2016sdca}, and Finito/MISO \cite{DefazioDomkeCaetano2014_finito, Mairal2013_optimization} can find such a $\hat{x}$ within $\cO\big((n+\frac{L}{\sigma})\ln(\frac{1}{\varepsilon})\big)$ evaluations of component gradients $\nabla f_i$, while vanilla gradient descent needs $\cO(n\frac{L}{\sigma}\ln{\frac{1}{\varepsilon}})$ evaluations. Recently, SCSG improves this complexity to $\cO\big((n\wedge\frac{L}{\sigma\varepsilon}+\frac{L}{\sigma})\ln\frac{1}{\varepsilon}\big)$\footnotemark[2]. When $\psi\neq 0$, many of these algorithms can be extended accordingly and the same gradient complexity is preserved \cite{xiao2014proximal,DefazioBachLacoste-Julien2014_saga,Shalev-ShwartzZhang2016_accelerated}. Among these methods, SVRG has been a popular choice due to its low memory cost.
\footnotetext[2]{$a\wedge b\coloneqq \min\{a, b\}$.}

When the condition number $\frac{L}{\sigma}$ is large, the performances of these variance reduction methods may degenerate considerably. In view of this, there have been many schemes that incorporate second-order information into the variance reduction schemes. In \cite{gonen2016solving}, the problem data is first transformed by linear sketching in order to decrease the condition number, then SVRG is applied. However, the strategy is only proposed for ridge regression and it is unclear whether it can be applied to other problems.

A larger family of algorithms, called Stochastic Quasi-Newton (SQN) methods, apply to more general settings. The idea is to first sample one or a few Hessian-vector products, then perform a L-BFGS type update on the approximate Hessian inverse $H_k$ \cite{byrd2016stochastic, moritz2016linearly, gower2016stochastic}, then $H_k$ is applied to the SVRG-type stochastic gradient as a preconditioner. That is,
\[
w_{t+1}=w_t-\eta H_k\tilde{\nabla}_t,
\]
where $\tilde{\nabla}_t$ is a variance-reduced stochastic gradient.

Linear convergence is established and competitive numerical performances are observed for SQN methods. However, the theoretical linear rate depends on the condition number of the approximate Hessian, which again depends poorly on the condition number of the objective, so it is not clear whether they are faster than SVRG in general. Furthermore, they do not support nondifferentiable regularizers nonconvexity of individual $f_i$. Recently, the first issue is partially resolved in \cite{lin2018inexact}, where the algorithm is at least as fast as SVRG. To deal with the second issue, \cite{wang2018stochastic} applied a $H_k-$preconditioned proximal mapping of $\psi$ after $H_k$ is applied to the variance reduced stochastic gradient, but in order to evaluate this mapping efficiently, $H_k$ is required to be of the symmetric rank-one update form $\tau I_d+u u^T$, where $I_d\in\R^{d\times d}$ is the identity matrix and $u\in\Rd$. However, $H_k$ is still ill-conditioned with a conditioner number of order $\cO(\frac{1}{\varepsilon})$, therefore only a gradient complexity of order $\cO\big((n+\kappa\frac{1}{\varepsilon})\ln(\frac{1}{\varepsilon})\big)$ can be guaranteed.

Another way of exploiting second-order information is to cyclically calculate one individual Hessian $\nabla^2f_i$ (or an approximation of it) \cite{rodomanov2016superlinearly,mokhtari2018iqn}, linear and locally superlinear convergence are established. However, they require at least an $O(n)$ amount of memory to store the local variables, which will be substantial when $n$ is large.

Aside from exploiting second-order information, it is also possible to apply Nesterov-type acceleration to SVRG. Recently, Katyusha \cite{allen2017katyusha} and Katyusha X \cite{allen2018katyusha} are developed in this spirit. Katyusha X also applies to the sum-of-nonconvex setting where each $f_i$ can be nonconvex. There are also ``Catalyst'' accelerated methods \cite{lin2015universal}, where a small amount of strong convexity $\frac{c}{2}\|x-y^k\|^2$ is added to the objective and is minimized inexactly at each step, then Nesterov acceleration is applied.  However, Catalyst methods have an additional $\ln k$ factor in gradient complexity over Katyusha and Katyusha X.

\subsection{Our Contributions}
\begin{enumerate}
    \item We propose to accelerate SVRG and Katyusha X by a \textit{fixed} preconditioner, as opposed to time-varying preconditioners in SQN methods. And the subproblems are solved with \textit{fixed} number of simple subroutines.
    \item If the preconditioner captures the second order information of $f$, then there will be significant accelerations. With a good preconditioner $M$, when $\kappa_f\in(n^{\frac{1}{2}}, n^{2}d^{-2})$, Algorithm \ref{alg: inexact preconditioned svrg} and Algorithm \ref{alg: inexact preconditioned Katyusha X} are $\cO(\frac{n^{\frac{1}{2}}}{\kappa_f})$ and $\cO(\sqrt{\frac{n^{\frac{1}{2}}}{\kappa_f}})$ times faster than SVRG and Katyusha X in terms of gradient complexity, respectively. When $\kappa_f>n^2d^{-2}$, these numbers become $\cO(\frac{d}{\sqrt{n\kappa_f}})$ and $\cO(\frac{d}{n^{\frac{3}{4}}})$. We also demonstrate these accelerations for Lasso and Logistic regression.
    \item Our acceleration applies to the sum-of-nonconvex setting, where $f(x)$ in \eqref{equ: objective} is strongly convex, but each individual $f_i$ can be nonconvex. We also allow a nondifferentiable regularizer $\psi(x)$.
\end{enumerate}

\section{Preliminaries and Assumptions}
\label{sec: preliminaries}
Throughout this paper, we use $\|\cdot\|$ for  $\ell_2-$norm and $\langle\cdot, \cdot\rangle$ for  dot product, $\|\cdot\|_1$ denotes the $\ell_1-$norm. 

The preconditioner $M\succ 0$ is a  symmetric, positive definite matrix. We write $\lambda_{\text{min}}(M)$ and $\lambda_{\text{max}}(M)$ as the smallest and the largest eigenvalues of $M$, respectively, and  $\kappa(M)\coloneqq\frac{\lambda_{\text{max}}(M)}{\lambda_{\text{min}}(M)}$ as the condition number of $M$. For $M\succ 0$, let $\|\cdot\|_M$ and $\langle \cdot, \cdot \rangle_M$ denote the norm and inner product induced by $M$, respectively, i.e., $\|x\|_M=\sqrt{x^T M x}, \langle x,y\rangle_M=x^T M y$.

We use $\lceil\cdot\rceil$ to denote the ceiling function. For $r\in(0,1]$, $N \sim$ \,\,\textbf{Geom}$(r)$ denotes a random variable $N$ that obeys the geometric distribution, i.e., $N=k$ with probability $(1-r)^k r$ for $k\in\mathbb{N}$. We have $\E[N]=\frac{1-p}{p}$. 
\begin{defn}
\label{def: smoothness}
We say that $f: \Rd \rightarrow \R$ is $L_f-$smooth, if it is differentiable and satisfies
\[
f(y)\leq f(x)+\langle \nabla f(x), y-x\rangle+\frac{L_f}{2}\|y-x\|^2, \forall x, y \in\Rd.
\]
We say that $f: \Rd \rightarrow \R$ is $L^M_f-$smooth under $\|\cdot\|_M$, if it is differentiable and satisfies
\[
f(y)\leq f(x)+\langle \nabla f(x), y-x\rangle+\frac{L_f^M}{2}\|y-x\|^2_M, \forall x, y \in\Rd.
\]
\end{defn}
\begin{defn}
\label{def: strong convexity}
We say that $f$ is $\sigma_f-$strongly convex, if 
\[
f(y)\geq f(x)+\langle \nabla f(x), y-x\rangle+\frac{\sigma_f}{2}\|y-x\|^2, \forall x, y \in\Rd.
\]
We say that $f$ is $\sigma^M_f-$strongly convex under $\|\cdot\|_M$, if 
\[
f(y)\geq f(x)+\langle \nabla f(x), y-x\rangle+\frac{\sigma_f^M}{2}\|y-x\|^2_M, \forall x, y \in\Rd.
\]
\end{defn}

$L^M_f-$smoothness under $\|\cdot\|_M$ is equivalent to $\|\nabla f_i(x)-\nabla f_i(y)\|_{M^{-1}}\leq L^M_f\|x-y\|_{M}$. Also, $\sigma^M_f-$strong convexity is equivalent to $\|\nabla f_i(x)-\nabla f_i(y)\|_{M^{-1}}\geq\sigma^M_f\|x-y\|_M$. Cf. Section 2 of \cite{Shalev-ShwartzZhang2016_accelerated}.

\begin{defn}
We define the condition number of $f$ under $\|\cdot\|_M$ as $\kappa^M_f\coloneqq\frac{L^M_f}{\sigma^M_f}$.
\end{defn}

When $M=I$, 
we have $\kappa^M_f=\kappa_f\coloneqq\frac{L_f}{\sigma_f}$. 

In this paper, we will choose $M$ such that $\kappa^M_f\ll \kappa$. For example, if $f(x)=\frac{1}{2}x^T Q x$ where $Q\succ 0$ is ill-conditioned, by choosing $M=Q$ we have
\[
\|\nabla f(x)-\nabla f(y)\|_{M^{-1}}\equiv \|x-y\|_Q,
\]
which tells us that $L^M_f=\sigma^M_f=1$ and $\kappa^M_f=1$, while $\kappa_f=\kappa(Q)\gg 1$. That is, under $Q-$metric, $f(x)$ has a much smaller condition number and can be minimized easily.

\begin{defn}
\label{def: subdifferential}
For a proper closed convex function $\phi:\Rd\rightarrow\R\cup\{+\infty\}$, its subdifferential at $x\in\dom f$ is written as
$$
\partial \phi(x)=\{v\in \Rd\,|\, \phi(z)\geq \phi(x)+\langle v, z-x\rangle\,\,\forall z\in\Rd\}.$$
\end{defn}
\begin{defn}
\label{def: proximal operator}
For a proper closed convex function $\phi:\Rd\rightarrow\RR$, its $M-$preconditioned proximal mapping with step size $\eta>0$ is defined by
\[
\prox^M_{\eta \psi}(x)=\argmin_{y\in\Rd}\{\psi(y)+\frac{1}{2\eta}\|x-y\|_M^2\}.
\]
\end{defn}
When $M=I$, this reduces to the classical proximal mapping.

Finally, let us list the assumptions that will be effective throughout this paper.
\begin{asmp}
\label{Assumption 1}
In the objective function \eqref{equ: objective}, 
\begin{enumerate}
    \item Each $f_i(x)$ is $L_f-$smooth and $L^M_f-$smooth under \\$\|\cdot\|_M$.
    \item $f(x)$ is $\sigma_f-$strongly convex, and $\sigma^M_f-$strongly convex under $\|\cdot\|_M$, where $\sigma_f>0$ and $\sigma^M_f>0$.
    \item The regularization term $\psi(x)$ is proper closed convex and $\prox_{\eta\psi}$ is easy to compute.
\end{enumerate}
\end{asmp}

\begin{rem}
\begin{enumerate}
    \item In Assumption \ref{Assumption 1}, we only require $f(x)=\frac{1}{n}\sum_{i=1}^n f_i(x)$ to be strongly convex, while each $f_i(x)$ can be nonconvex.
    \item Several common choices of regularizers have simple proximal mappings. For example, when $\psi(x)=\lambda \|\cdot\|_1$ with $\lambda>0$, $\prox_{\eta \psi}$ can be computed component wise as
    \[
    \prox_{\eta \psi}(x)=\text{sign}(x)\max\{|x|-\eta\lambda, 0\}.
    \]
\end{enumerate}
\end{rem}

\section{Proposed Algorithms}
\label{sec: proposed algorithms}

As discussed in Sec. \ref{sec: introduction}, SVRG and Katyusha X suffer from ill-conditioning like other first order methods. In this section, we propose to accelerate them by applying inexact preconditioning. Let us illustrate the idea as follows,

\begin{enumerate}
    \item We would like to apply a preconditioner $M\succ 0$ to the gradient descent step in SVRG. i.e.,
    \begin{align}
    \label{equ: exact update}
    w_{t+1}&=\prox^M_{\eta\psi}(w_t-\eta M^{-1} \tilde{\nabla}_t)\nonumber\\
    &=\argmin_{y\in\mathbb{R}^d}\{\psi(y)+\frac{1}{2\eta}\|y-w_t\|_M^2+\langle \tilde{\nabla}_t, y\rangle\}.
    \end{align}
    where $\tilde{\nabla}_t$ is a variance-reduced stochastic gradient. When $\psi=0$ and this minimization is solved exactly, we have $w_{t+1}=w_t-\eta M^{-1}\tilde{\nabla}_t$, which is a preconditioned gradient update.
    \item However, solving \eqref{equ: exact update} exactly may be expensive and impractical. In fact it suffices to solve it \textit{highly inexactly} by \textit{fixed} number of simple subroutines.
\end{enumerate}
We summarize the resulted algorithm in Algorithm \ref{alg: inexact preconditioned svrg} and call it Inexact Preconditioned(IP-) SVRG. Compared to SVRG, the only difference lies in line $7$. 

\begin{algorithm}[H]    
\caption{$\text{Inexact Preconditioned SVRG(iPreSVRG)}$}
\label{alg: inexact preconditioned svrg}
    \textbf{Input:} $F(\cdot)=\psi(\cdot)+\frac{1}{n}\sum_{i=1}^{n} f_i(\cdot)$, initial vector $x^0$, step size $\eta>0$, preconditioner $M\succ 0$, number of epochs $K$.\\
    \textbf{Output:} vector $x^K$
    \begin{algorithmic}[1]
        \FOR{$k\leftarrow 0,...,K-1$}{}
        \STATE{$D^k \sim  \text{\textbf{Geom}}(\frac{1}{m})$;}
        \STATE{$w_0\leftarrow x^k$, $g\leftarrow \nabla f(x^k)$;}
        \FOR{$t \leftarrow 0,...,D^k$}{}
        \STATE{pick $i_t\in\{1,2,...,n\}$ uniformly at random;}
        \STATE{$\tilde{\nabla}_t=g+\big(\nabla f_{i_t}(w_t)-\nabla f_{i_t}(w_0)\big);$}
        \STATE{$w_{t+1}\approx \argmin_{y\in\mathbb{R}^d}\{\psi(y)+\frac{1}{2\eta}\|y-w_t\|_M^2+\langle \tilde{\nabla}_t, y\rangle\};$}
        \ENDFOR
        \STATE{$x^{k+1}\leftarrow w_{D+1};$}
        \ENDFOR
    \end{algorithmic}
\end{algorithm}
\begin{rem}
\begin{enumerate}
\item In line $2$, the epoch length $D^k$ obeys a geometric distribution and $\mathbb{E}[m^k]=m-1$, this is for the purpose of simplifying analysis (motivated by \cite{lei2017less,allen2018katyusha}), in practice one can just set $D^k=m-1$. In our experiments, this still brings significant accelerations.
\item The choice of $m$ affects the performance. Intuitively, a larger $m$ means more gradient evaluations per epoch, but also more progress per epoch. Theoretically, we show that $m=\lceil\frac{n}{1+pd}\rceil$ gives faster convergence than SVRG, where $p$ is the number of subroutines used in Line $7$.
\item In line $6$, one can also sample a batch of gradients instead of one. It is straightforward to generalize our convergence results in Sec. \ref{sec: main theory} to this setting.
\item If $M=I$, line $7$ reduces to
\[
w_{t+1}=\prox_{\eta \psi}(w_t-\eta \tilde{\nabla}_t),
\]
and Algorithm \ref{alg: inexact preconditioned svrg} reduces to SVRG.
\end{enumerate}
\end{rem}
For $M\not\propto I$, line $7$ contains an optimization problem that may not have a closed form solution:
\begin{equation}
\argmin_{y\in\mathbb{R}^d}\{\psi(y)+\frac{1}{2\eta}\|y-w_t\|_M^2+\langle \tilde{\nabla}_t, y\rangle\}.
\label{equ: subproblem}
\end{equation}
To solve it inexactly, we propose to apply \textit{fixed} number of iterations of some simple subroutines, which are initialized at $w_t$. This procedure is summarized in Procedure \ref{alg: fixed number of inner loops}.

\begin{procedure}[H]
\caption{$\text{Procedure for solving \eqref{equ: subproblem} inexactly}$}
\label{alg: fixed number of inner loops}
    \textbf{Input:} Iterator $S$, iterator step size $\gamma>0$, number of iterations $p\geq 1$, problem data $\eta>0, w_t, M\succ 0, \tilde{\nabla}_t, \psi(\cdot)$.\\
    \textbf{Output:} vector $w_{t+1}$
    \begin{algorithmic}[1]
    \STATE{$w^{0}_{t+1}\leftarrow w_t;$}
        \FOR{$i\leftarrow 0,...,p-1$}{}
        \STATE{$w^{i+1}_{t+1}=S(w^i_{t+1}, \eta, M, \tilde{\nabla}_t, \psi)$;}
        \ENDFOR
    \STATE{$w_{t+1}\leftarrow w^p_{t+1}$;}
    \end{algorithmic}
\end{procedure}

\begin{rem}
In Procedure \ref{alg: fixed number of inner loops}, there are many choices for the iterator $S$, for example, one can use proximal gradient, FISTA \cite{beck2009fast} (or equivalently, Nesterov acceleration \cite{nesterov2013introductory}), and FISTA with restart \cite{o2015adaptive}. Under these choices, line $3$ is easy to compute. For example, when $S$ is the proximal gradient step, line $3$ of Procedure \ref{alg: fixed number of inner loops} becomes
\[
w^{i+1}_{t+1} = \prox_{\gamma \psi}(w^{i}_{t+1}- \frac{\gamma}{\eta}M(w^{i}_{t+1}-w_t)-\gamma\tilde{\nabla}_t).
\]
\end{rem}
Now, let us also apply the inexact preconditioning idea to Katyusha X (Algorithm 2 of \cite{allen2018katyusha}). Similar to Katyusha X, we first apply a momentum step, then one epoch of iPreSVRG (i.e., line $2\sim 9$ of Algorithm \ref{alg: inexact preconditioned svrg}).
\begin{algorithm}[H]    
\caption{$\text{Inexact Preconditioned Katyusha X(iPreKatX)}$}
\label{alg: inexact preconditioned Katyusha X}
    \textbf{Input:} $F(x)=\psi(x)+\frac{1}{n}\sum_{i=1}^{n} f_i(x)$, initial vector $x^0$, step size $\eta>0$, preconditioner $M\succ 0$, momentum weight $\tau\in(0,1]$, number of epochs $K$.\\
    \textbf{Output:} vector $y^K$
    \begin{algorithmic}[1]
    \STATE{$y_{-1}=y_0\leftarrow x_0;$}
        \FOR{$k\leftarrow 0,...,K-1$}{}
        \STATE{$x_{k+1}\leftarrow \frac{\frac{3}{2}y_k+\frac{1}{2}x_k-(1-\tau)y_{k-1}}{1+\tau}$;}
        \STATE{$y_{k+1}\leftarrow \text{Algorithm \ref{alg: inexact preconditioned svrg}}^{\text{1ep}}(F, M, x_{k+1}, \eta)$;}
        \ENDFOR
    \end{algorithmic}
\end{algorithm}

\begin{rem}
\begin{enumerate}
    \item When $\tau=\frac{1}{2}$, one can show that $x_{k+1}\equiv y_k$, and Algorithm \ref{alg: inexact preconditioned Katyusha X} reduces to Algorithm \ref{alg: inexact preconditioned svrg}.
    \item When $M=I$ and the proximal mapping is solved exactly, Algorithm \ref{alg: inexact preconditioned Katyusha X} reduces to Katyusha X. 
    \item The convergence of Algorithm \ref{alg: inexact preconditioned Katyusha X} is established when $\tau=\frac{1}{2}\sqrt{\frac{1}{2}m\eta\sigma^M_f}$. In practice, we found that many other choices of $\tau$ also work.
\end{enumerate}
\end{rem}

\section{Main Theory}
\label{sec: main theory}
In this section, we proceed to establish the convergence of Algorithm \ref{alg: inexact preconditioned svrg} and Algorithm \ref{alg: inexact preconditioned Katyusha X}. The key idea is that when the preconditioned proximal gradient update in \eqref{equ: subproblem} is solved inexactly as in Procedure \ref{alg: fixed number of inner loops},
the error can be bounded by $\|w_{t+1}-w_{t}\|_M$, under which we can still establish the overall convergence of Algorithm \ref{alg: inexact preconditioned svrg} and Algorithm \ref{alg: inexact preconditioned Katyusha X}. 
Combine this with the fixed number of simple subroutines in Procedure \ref{alg: fixed number of inner loops}, we obtain a much lower gradient complexity when $\kappa_f>n^{\frac{1}{2}}$.

All the proofs in this section are deferred to the supplementary material.

First, Let us analyze the error in the optimality condition of \eqref{equ: subproblem} when it is solved inexactly by FISTA with restart as in Procedure \ref{alg: fixed number of inner loops}. Specifically,



Let $h_1(y)=\psi(y)$ and 
$    h_2(y)=\frac{1}{2\eta}\|y-w_t\|^2_{M}+\langle\tilde{\nabla}, y\rangle,$ then the subproblem \eqref{equ: subproblem} can be written as
\[
\min_{y} \Psi(y)=h_1(y)+h_2(y).
\]

Therefore, FISTA with restart applied to \eqref{equ: subproblem} can be summarized in the following algorithm.
\begin{algorithm}[H]
\caption{FISTA with restart for solving \eqref{equ: subproblem}}
\label{alg: fixed number of FISTA with restart}
\textbf{Input:} Iterator $S$, iterator step size $\gamma>0$, number of iterations $p\geq 1$, problem data $\eta>0, w_t, h_1(y)=\psi(y)$ and 
$    h_2(y)=\frac{1}{2\eta}\|y-w_t\|^2_{M}+\langle\tilde{\nabla}, y\rangle.$
    \begin{algorithmic}[1]
    \STATE{$w^{(0,0)}_{t+1}=u^{(0,1)}_{t+1}\leftarrow w_t, \theta_0 = 1$}
        \FOR{$i\leftarrow 0,...,r-1$}{}
        \FOR{$j\leftarrow 0,..., p_0-1$}{}
        \STATE{$\theta_0=1;$}
        \STATE{$w^{(i,j+1)}_{t+1}=\prox_{\gamma h_1}\big(u^{(i, j+1)}_{t+1}-\gamma \nabla h_2(u^{(i, j+1)}_{t+1})\big)$;}
        \STATE{$\theta_{j+1}=\frac{1+\sqrt{1+4\theta_{j}^2}}{2}$;}
        \STATE{$u^{(i, j+2)}_{t+1}=w^{(i, j+1)}_{t+1}+\frac{\theta_j-1}{\theta_{j+1}}(w^{(i, j+1)}_{t+1}-w^{(i, j)}_{t+1});$}
        \ENDFOR
        \STATE{$w^{(i+1, 0)}_{t+1}=u^{(i+1, 1)}_{t+1}\leftarrow w^{(i, p_0)}_{t+1}$}
        \ENDFOR
    \STATE{$w_{t+1}\leftarrow w^{(r-1,p_0)}_{t+1}$;}
    \end{algorithmic}
\end{algorithm}

\begin{lem}
\label{lem: finite loops of FISTA with restart}
Take Assumption \ref{Assumption 1}. Suppose in Procedure \ref{alg: fixed number of inner loops}, we choose $S$ as the iterator of FISTA with restart\footnotemark[1] every $p_0=\lceil 2e\sqrt{\kappa(M)}\rceil$ steps, with step size $\gamma=\frac{\eta}{\lambda_{\mathrm{max}}(M)}$ and restart it $(r-1)$ times (that is, $p=r p_0$ iterations in total). Then, $w_{t+1}=w_{t+1}^{(r-1, p_0)}$ is an approximate solution to \eqref{equ: subproblem} that satisfies 
\begin{align}
\vz \in &\partial \psi(w_{t+1})+\frac{1}{\eta}M(w_{t+1}-w_t)+\tilde{\nabla}_t+M\varepsilon^p_{t+1},\label{equ: inexact optimality condition}\\
&\|\varepsilon^p_{t+1}\|_M\leq\frac{c(p)}{\eta}\|w_{t+1}-w_t\|_M,\label{equ: error bound}
\end{align}
where
\begin{align*}
    c(p)=14\kappa(M)\frac{\tau^p}{1-\tau^p},
\end{align*}
and 
\begin{align*}
\tau&=(\frac{4\kappa(M)}{p_0^2})^{\frac{1}{2 p_0}}\leq\exp(-\frac{1}{2e\sqrt{\kappa(M)}+1})<1.
\end{align*}
\end{lem}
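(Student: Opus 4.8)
The plan is to extract an inexact optimality condition from the last proximal-gradient step of Algorithm~\ref{alg: fixed number of FISTA with restart} and then control the residual by combining the linear convergence of fixed-restart FISTA with a spectral estimate for the operator producing that residual. Throughout, write $\Psi=h_1+h_2$, $w^\star=\argmin_y\Psi(y)$, $\Psi^\star=\Psi(w^\star)$. Since $h_2(y)=\frac1{2\eta}\|y-w_t\|_M^2+\langle\tilde\nabla_t,y\rangle$ is an exact quadratic with Hessian $\frac1\eta M$, the composite $\Psi$ is $\frac1\eta$-strongly convex in $\|\cdot\|_M$ (so $w^\star$ is unique), while $h_2$ is $L$-smooth and $\mu$-strongly convex in the Euclidean norm with $L=\lambda_{\max}(M)/\eta$, $\mu=\lambda_{\min}(M)/\eta$, $L/\mu=\kappa(M)$; note the prescribed step is $\gamma=1/L$.

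First I would read off the residual. Writing $u:=u^{(r-1,p_0)}_{t+1}$, we have $w_{t+1}=\prox_{\gamma h_1}(u-\gamma\nabla h_2(u))$, whose optimality condition is $\vz\in\partial\psi(w_{t+1})+\frac1\gamma(w_{t+1}-u)+\nabla h_2(u)$. Substituting $\nabla h_2(u)=\frac1\eta M(u-w_t)+\tilde\nabla_t$ and rearranging gives precisely \eqref{equ: inexact optimality condition} with $M\varepsilon^p_{t+1}=(\frac1\eta M-\frac1\gamma I)(u-w_{t+1})=\frac1\eta(M-\lambda_{\max}(M)I)(u-w_{t+1})$. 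Because this operator is a polynomial in $M$, diagonalizing $M$ gives $\|\varepsilon^p_{t+1}\|_M^2\le\frac1{\eta^2}\big(\max_{\lambda\in[\lambda_{\min}(M),\lambda_{\max}(M)]}\frac{(\lambda-\lambda_{\max}(M))^2}{\lambda}\big)\|u-w_{t+1}\|^2$, and since $\lambda\mapsto\frac{(\lambda-\lambda_{\max}(M))^2}{\lambda}$ is decreasing on $(0,\lambda_{\max}(M)]$ the maximum is at $\lambda=\lambda_{\min}(M)$, so $\|\varepsilon^p_{t+1}\|_M\le\frac{(\kappa(M)-1)\sqrt{\lambda_{\min}(M)}}{\eta}\|u-w_{t+1}\|$.

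Next I would bound $\|u-w_{t+1}\|$ through the linear convergence of restarted FISTA, using the \emph{distance} form of the Beck--Teboulle estimate, $\Psi(w^{(i,j)}_{t+1})-\Psi^\star\le\frac{2L}{(j+1)^2}\|w^{(i,0)}_{t+1}-w^\star\|^2$, which never invokes $\Psi(w_t)-\Psi^\star$. Combining it with $\frac\mu2\|w^{(i,0)}_{t+1}-w^\star\|^2\le\Psi(w^{(i,0)}_{t+1})-\Psi^\star$ gives, for $\Delta_i:=\Psi(w^{(i,p_0)}_{t+1})-\Psi^\star$, the recursion $\Delta_i\le\frac{4\kappa(M)}{(p_0+1)^2}\Delta_{i-1}\le\tau^{2p_0}\Delta_{i-1}$ ($i\ge1$) and $\Delta_0\le\frac{2\kappa(M)}{\eta(p_0+1)^2}\|w_t-w^\star\|_M^2\le\frac{\tau^{2p_0}}{2\eta}\|w_t-w^\star\|_M^2$ after converting norms, where $\tau^{2p_0}=\frac{4\kappa(M)}{p_0^2}$ and $p_0=\lceil2e\sqrt{\kappa(M)}\rceil\le 2e\sqrt{\kappa(M)}+1$ yields $\tau<1$ and the claimed bound $\tau\le\exp(-\frac1{2e\sqrt{\kappa(M)}+1})$. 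Iterating over the $r$ blocks gives $\Delta_{r-1}\le\frac{\tau^{2p}}{2\eta}\|w_t-w^\star\|_M^2$, hence $\|w_{t+1}-w^\star\|_M\le\tau^p\|w_t-w^\star\|_M$ and $\|w^{(r-1,0)}_{t+1}-w^\star\|\le\frac{\tau^{p-p_0}}{\sqrt{\lambda_{\min}(M)}}\|w_t-w^\star\|_M$. Inside the last block, applying the distance estimate at $j=p_0-2,p_0-1,p_0$ (with $p_0\ge2$) gives $\|w^{(r-1,j)}_{t+1}-w^\star\|\le 2\tau^{p_0}\|w^{(r-1,0)}_{t+1}-w^\star\|$, and since the momentum coefficients lie in $[0,1)$ also $\|u-w^\star\|\le 4\tau^{p_0}\|w^{(r-1,0)}_{t+1}-w^\star\|$; therefore $\|u-w_{t+1}\|\le\frac{5\tau^p}{\sqrt{\lambda_{\min}(M)}}\|w_t-w^\star\|_M$. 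Plugging this into the residual bound of the previous paragraph and using $\|w_t-w^\star\|_M\le\frac1{1-\tau^p}\|w_{t+1}-w_t\|_M$ (triangle inequality with $\|w_{t+1}-w^\star\|_M\le\tau^p\|w_t-w^\star\|_M$) yields $\|\varepsilon^p_{t+1}\|_M\le\frac{5(\kappa(M)-1)}{\eta}\frac{\tau^p}{1-\tau^p}\|w_{t+1}-w_t\|_M$, which is stronger than the asserted $c(p)=14\kappa(M)\frac{\tau^p}{1-\tau^p}$.

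The main obstacle is exactly the temptation to route $\|u-w_{t+1}\|$ through the initial objective gap $\Psi(w_t)-\Psi^\star$: this gap is \emph{not} $O(\|w_t-w^\star\|^2)$ for a general nonsmooth $\psi$ (e.g. one with a steep interior barrier), so the estimate must stay in terms of distances via the distance form of the FISTA rate. A second bookkeeping point is that the small-eigenvalue directions of $M$, where the residual operator $\frac1\eta(M-\lambda_{\max}(M)I)$ is largest but carries the smallest $M$-weight, must be shown to contribute $\sqrt{\lambda_{\min}(M)}$ rather than $\sqrt{\lambda_{\max}(M)}$, which is what keeps the final constant linear in $\kappa(M)$ instead of $\kappa(M)^{3/2}$.
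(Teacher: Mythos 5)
Your proposal is correct and reaches a strictly sharper constant than the paper's. The overall skeleton is the same as the paper's proof: read off the residual from the optimality condition of the last proximal step, control it via the $\|x_0-x^\star\|$ form of the Beck--Teboulle rate combined with strong convexity, and convert $\|w_t-w^\star\|_M$ into $\|w_{t+1}-w_t\|_M$ via the $Q$-linear contraction. The genuine difference is the way the residual operator is bounded in the $M$-norm. The paper writes $M\varepsilon^p_{t+1}=\frac1\gamma(w_{t+1}-u)+\frac1\eta M(u-w_{t+1})$, expands $u$ through the momentum formula, and then triangle-bounds $\|M\varepsilon^p_{t+1}\|$ by a sum of four terms each carrying a $\lambda_{\max}(M)/\eta$ prefactor, after which the conversion $\|\varepsilon^p_{t+1}\|_M\leq\lambda_{\min}(M)^{-1/2}\|M\varepsilon^p_{t+1}\|$ produces the extra $\kappa(M)$. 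You instead collapse the residual into a single polynomial in $M$, namely $\varepsilon^p_{t+1}=\frac1\eta\bigl(I-\lambda_{\max}(M)M^{-1}\bigr)(u-w_{t+1})$, and note that $\|\varepsilon^p_{t+1}\|_M^2$ involves the spectral function $\lambda\mapsto(\lambda-\lambda_{\max}(M))^2/\lambda$, which vanishes at $\lambda_{\max}(M)$ and peaks at $\lambda_{\min}(M)$ with value $(\lambda_{\max}-\lambda_{\min})^2/\lambda_{\min}$; this yields $\|\varepsilon^p_{t+1}\|_M\leq\frac{(\kappa(M)-1)\sqrt{\lambda_{\min}(M)}}{\eta}\|u-w_{t+1}\|$, which the paper's route would only give with $\kappa(M)$ in place of $\kappa(M)-1$ and with an extra factor of two from the split. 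Bounding $\|u-w_{t+1}\|$ by going through $w^\star$ rather than through consecutive inner iterates is another minor simplification. The net effect is $c(p)\leq 5(\kappa(M)-1)\frac{\tau^p}{1-\tau^p}$, strictly stronger than the stated $14\kappa(M)\frac{\tau^p}{1-\tau^p}$. One small bookkeeping caveat: the displayed uniform bound $\|w^{(r-1,j)}_{t+1}-w^\star\|\le 2\tau^{p_0}\|w^{(r-1,0)}_{t+1}-w^\star\|$ for $j\in\{p_0-2,p_0-1,p_0\}$ would only give $\|u-w^\star\|<6\tau^{p_0}\|w^{(r-1,0)}_{t+1}-w^\star\|$; to get the quoted $4\tau^{p_0}$ (and hence the $5$) one must keep the per-$j$ constants $\tau^{p_0}$, $\tau^{p_0}$, $2\tau^{p_0}$ separately, as you clearly intended. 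Either way the approach is sound and improves the constant, so the theorem's conclusion holds a fortiori.
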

\footnotetext[1]{FISTA with restart can be replaced with any iterator with Q-linear convergence on the iterates. In our experiments, FISTA also works, and a simple choice of $p=20$ is enough.}


With Lemma \ref{lem: finite loops of FISTA with restart}, the overall convergences of Algorithm \ref{alg: inexact preconditioned svrg} and \ref{alg: inexact preconditioned Katyusha X} can be established. The analysis is similar to that of \cite{allen2018katyusha}.

\begin{thm}
\label{thm: convergence of inexact preconditioned svrg}
Under Assumption \ref{Assumption 1}, let $x^*=\argmin_x F(x)$, $64\kappa^M_f c^2(p)\leq 1$, $\eta\leq \frac{1}{2\sqrt{m}L^M_f}$, and $m\geq 4$. Then the iPreSVRG in Algorithm \ref{alg: inexact preconditioned svrg} satisfies
\begin{align}
    \label{equ: inexact preconditioned svrg linear convergence}
    \mathbb{E}[F({x}^{k})-F(x^*)]\leq \cO\big((\frac{1}{1+\frac{1}{4}m\eta\sigma^M})^k\big).
\end{align}
\end{thm}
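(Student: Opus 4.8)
The plan is to run the standard variance-reduction argument in the $M$-metric — a one-step progress inequality, assembled into a Lyapunov recursion, then telescoped across the geometric-length epoch — modified so that (a) the preconditioned prox is only solved inexactly, via Lemma \ref{lem: finite loops of FISTA with restart}, and (b) the component functions $f_i$ are only assumed smooth, not convex. The overall structure follows \cite{allen2018katyusha}.

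\emph{Step 1: one inner iteration.} Fix an epoch, write $w_0 = x^k$, $g = \nabla f(x^k)$, let $x^* = \argmin F$, and abbreviate $R_t = \|w_t - x^*\|_M^2$, $\delta_t = \|w_{t+1} - w_t\|_M^2$, $G_t = F(w_t) - F(x^*)$. By Lemma \ref{lem: finite loops of FISTA with restart}, $-\tfrac1\eta M(w_{t+1} - w_t) - \tilde\nabla_t - M\varepsilon^p_{t+1} \in \partial\psi(w_{t+1})$ with $\|\varepsilon^p_{t+1}\|_M \le \tfrac{c(p)}\eta\|w_{t+1} - w_t\|_M$. I would combine four ingredients: (i) $L^M_f$-smoothness of $f$ to upper bound $f(w_{t+1})$; (ii) convexity of $\psi$ together with the perturbed optimality condition, tested at the point $x^*$, to rewrite $\psi(w_{t+1})$; (iii) $\sigma^M_f$-strong convexity of $f$, giving $f(w_t) + \langle\nabla f(w_t), x^* - w_t\rangle \le f(x^*) - \tfrac{\sigma^M_f}2 R_t$; (iv) the polarization identity $\tfrac1\eta\langle M(w_{t+1} - w_t), x^* - w_{t+1}\rangle = \tfrac1{2\eta}(R_t - R_{t+1} - \delta_t)$. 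Rearranging yields an inequality of the form
\[
G_{t+1} + \tfrac1{2\eta}R_{t+1} \le (1 - \eta\sigma^M_f)\tfrac1{2\eta}R_t + \big(\tfrac{L^M_f}2 - \tfrac1{2\eta}\big)\delta_t + \langle\nabla f(w_t) - \tilde\nabla_t,\, w_{t+1} - w_t\rangle + \langle M\varepsilon^p_{t+1},\, x^* - w_{t+1}\rangle.
\]

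\emph{Step 2: conditional expectation and a Lyapunov recursion.} Conditioning on the past $\cF_t$: $\tilde\nabla_t$ is unbiased for $\nabla f(w_t)$, and, since each $f_i$ is $L^M_f$-smooth under $\|\cdot\|_M$ (equivalently $\|\nabla f_i(x) - \nabla f_i(y)\|_{M^{-1}} \le L^M_f\|x - y\|_M$) and $\E\|X - \E X\|^2 \le \E\|X\|^2$, the residual satisfies $\E[\|\tilde\nabla_t - \nabla f(w_t)\|_{M^{-1}}^2 \mid \cF_t] \le (L^M_f)^2\|w_t - w_0\|_M^2$ — crucially with no convexity of the $f_i$ used, which is exactly what admits the sum-of-nonconvex setting. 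I would bound the two inner products by Young's inequality in the $M^{-1}/M$ pairing; the inexactness term is absorbed using $\|\varepsilon^p_{t+1}\|_M \le \tfrac{c(p)}\eta\|w_{t+1} - w_t\|_M$, and the hypothesis $64\kappa^M_f c^2(p) \le 1$ is what guarantees that the coefficient of $\delta_t$ stays strictly negative (so a definite amount of descent survives) and the coefficient of $R_t$ stays essentially $\tfrac1{2\eta}(1 - \eta\sigma^M_f)$. Setting $\Phi_t := G_t + \tfrac1{2\eta}R_t$ and using $G_{t+1} \ge 0$ to absorb the slack, this produces
\[
\E[\Phi_{t+1} \mid \cF_t] \le (1 - \tfrac12\eta\sigma^M_f)\,\Phi_t - b\,\E[\delta_t \mid \cF_t] + a\,\|w_t - w_0\|_M^2,
\]
with $b \asymp \tfrac1\eta > 0$ and $a \asymp \eta(L^M_f)^2$.

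\emph{Step 3: epoch telescoping.} Sum this recursion over $t = 0, \dots, D^k$ and take expectation over $D^k \sim \textbf{Geom}(1/m)$ (independent of the sampling), using the elementary identities $\E[\sum_{t=0}^{D-1}a_t] = (m-1)\E[a_D]$ and $\E[\Phi_D] = \tfrac1m\Phi_0 + (1 - \tfrac1m)\E[\Phi_{D+1}]$. The geometric accumulation of the factor $(1 - \tfrac12\eta\sigma^M_f)$ turns into a per-epoch contraction, $\E[\Phi_{D^k+1}] \le \frac{1 - \frac12\eta\sigma^M_f}{1 + \frac12(m-1)\eta\sigma^M_f}\,\Phi_0 + (\text{residual})$; the assumption $m \ge 4$ makes the prefactor at most $\tfrac1{1 + \frac14 m\eta\sigma^M_f}$. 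The residual collects $-b\sum_t\delta_t$ against the accumulated variance $a\sum_t\|w_t - w_0\|_M^2$: one bounds $\|w_t - w_0\|_M^2$ in terms of the $\delta_s$ with $s < t$ and uses the geometric weights so that the net residual has non-positive expectation as soon as $a m \lesssim \tfrac1\eta$, i.e. $\eta(L^M_f)^2 m \lesssim \tfrac1\eta$, which is precisely $\eta \le \tfrac1{2\sqrt m L^M_f}$. Since $w_0 = x^k$ and $w_{D^k+1} = x^{k+1}$, this reads $\E[\Phi^{k+1}] \le \tfrac1{1 + \frac14 m\eta\sigma^M_f}\E[\Phi^k]$; unrolling over $k$ and using $F(x^k) - F(x^*) \le \Phi^k$ gives \eqref{equ: inexact preconditioned svrg linear convergence}, with $\cO(\cdot)$ absorbing $\Phi^0$.

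\emph{Main obstacle.} The hard part is the residual bookkeeping in Step 3: the SVRG variance term $\|w_t - w_0\|_M^2$ can grow along the inner loop, so bounding the accumulated $\sum_{t \le D^k}\|w_t - w_0\|_M^2$ against $\sum_{t \le D^k}\delta_t$ (plus a controlled multiple of $\Phi_0$) must exploit the geometric length carefully — crude expansions such as $\|w_t - w_0\|_M^2 \le t\sum_{s<t}\delta_s$ are too lossy and would force $\eta \lesssim \tfrac1{mL^M_f}$ rather than $\tfrac1{\sqrt m L^M_f}$. The second delicate point is threading the inexactness perturbation $M\varepsilon^p_{t+1}$ through Step 1 without destroying either the $\delta_t$-descent or the contraction, which is what pins down the requirement $64\kappa^M_f c^2(p) \le 1$.
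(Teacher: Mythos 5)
Your Steps 1--2 track the paper's argument closely: the perturbed first-order condition and polarization identity (paper's Lemma~\ref{lem: inexact descent}), the variance bound $\E\|\tilde\nabla_t-\nabla f(w_t)\|^2_{M^{-1}}\le (L^M_f)^2\|w_t-w_0\|^2_M$ that uses only smoothness (paper's Lemma~\ref{lem: lem 2 for thm 1}), and Young's inequality to split $\langle M\varepsilon^p_{t+1}, u-w_{t+1}\rangle$ under $64\kappa^M_f c^2(p)\le 1$ (paper's Lemma~\ref{lem: lem 1 for thm 1}). The in-epoch telescoping via $\mathrm{Geom}(1/m)$ is also the same device as the paper's Lemma~\ref{lem: lem 4 for thm 1}.

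The gap is in the final assembly. After the epoch averaging, the inequality you obtain (for a generic test point $u$) has the form
\begin{align*}
\E[F(w_{D+1})-F(u)] \le \E\Big[&-\tfrac{1}{4m\eta}\|w_{D+1}-w_0\|_M^2 + \tfrac{1}{m\eta}\langle w_0-w_{D+1},\,w_0-u\rangle_M \\
&- \big(\tfrac{\sigma^M_f}{4}-\tfrac{c(p)}{2a\eta}\big)\|w_{D+1}-u\|_M^2\Big].
\end{align*}
Setting $u=x^*$ and expanding the cross term leaves a \emph{positive} residual $+\tfrac{1}{4m\eta}\|x^{k+1}-x^k\|_M^2$ that the Lyapunov $\Phi = G + \tfrac{1}{2\eta}R$ cannot absorb; with only this choice of $u$ you get $\E[F(x^{k+1}) + \mathrm{const}\cdot\|x^{k+1}-x^*\|_M^2]\le \tfrac{1}{m\eta}\E\|x^k-x^*\|_M^2$ with no $F(x^k)$ on the right and no geometric contraction. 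So the claimed recursion $\E[\Phi^{k+1}]\le \tfrac{1}{1+\frac14 m\eta\sigma^M}\E[\Phi^k]$ simply does not follow from the epoch bound. The paper closes this by instantiating the epoch bound \emph{twice}, at $u=x^*$ and at $u=x^j$: the latter yields $F(x^{j+1})-F(x^j)\le -C\|x^{j+1}-x^j\|_M^2$ with $C>0$, which supplies exactly the $-\|x^{j+1}-x^j\|_M^2$ needed to cancel the positive residual from the first. One then takes $(1+2\tau)\times(\text{first inequality}) + (\text{second})$, multiplies by $(1+\tau)^j$ with $\tau=\tfrac14 m\eta\sigma^M_f$, and sums over epochs to arrive at $F(x^k)(1+\tau)^k \le (2+\tfrac{1}{2\tau})F(x^0)$. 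This double-test-point combination (following Hannah et al.\ 2018) is the piece your outline skips, and it is where the rate actually materializes. Your ``main obstacle'' paragraph flags the in-epoch variance bookkeeping (which the geometric-distribution identities handle cleanly) but not this cross-epoch issue, which is the real one.
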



\begin{thm}
\label{thm: convergence of inexact preconditioned Katyusha X}
Under Assumption \ref{Assumption 1}, let $x^*=\argmin_x F(x)$, $64\kappa^M_f c^2(p)\leq 1$, $\tau=\frac{1}{2}\sqrt{\frac{1}{2}m\eta \sigma^M_f}$, $\eta\leq \frac{1}{2\sqrt{m}L^M_f}$, and $m\geq 4$. Then the iPreKatX in Algorithm \ref{alg: inexact preconditioned Katyusha X} satisfies
\begin{align}
    \label{equ: inexact preconditioned Katyusha X linear convergence}
    \mathbb{E}[F({x}^{k})-F(x^*)]\leq \cO\big((\frac{1}{1+\frac{1}{2}\sqrt{\frac{1}{2}m\eta\sigma^M}})^k\big).
\end{align}
\end{thm}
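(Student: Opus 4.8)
The plan is to mirror the linear-coupling analysis of Katyusha X in \cite{allen2018katyusha}, carried out throughout in the $\|\cdot\|_M$ geometry and with the subproblem inexactness fed in through Lemma \ref{lem: finite loops of FISTA with restart}. The structural fact I would isolate first is that one epoch of iPreSVRG (line $4$ of Algorithm \ref{alg: inexact preconditioned Katyusha X}, i.e.\ lines $2$--$9$ of Algorithm \ref{alg: inexact preconditioned svrg}), launched from $x_{k+1}$ and returning $y_{k+1}$, behaves like a single mirror-descent step for $F$ in the $M$-metric: for every $u\in\Rd$,
\begin{align*}
\mathbb{E}\big[F(y_{k+1})\big] - F(u) \le \frac{1}{2m\eta}\Big(\|x_{k+1}-u\|_M^2 - \mathbb{E}\,\|y_{k+1}-u\|_M^2\Big),
\end{align*}
where the expectation is over the epoch randomness conditioned on the past. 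Once this is in hand, the momentum recursion in line $3$ is exactly a Nesterov/linear-coupling extrapolation and the rate follows by a standard potential argument.

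\textbf{Step 1: the one-epoch inequality.} Inside an epoch, write $w_0 = x_{k+1}$, $w_{D+1} = y_{k+1}$. For a fixed $u$ and each inner index $t$, expand $\tfrac{1}{2\eta}\|w_{t+1}-u\|_M^2$ using the inexact optimality condition \eqref{equ: inexact optimality condition}, namely $\vz \in \partial\psi(w_{t+1}) + \tfrac1\eta M(w_{t+1}-w_t) + \tilde\nabla_t + M\varepsilon^p_{t+1}$; convexity of $\psi$ together with the $M$-metric three-point identity produces a per-step inequality whose only ``bad'' term is $\langle \varepsilon^p_{t+1}, w_{t+1}-u\rangle_M$. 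Bound that term with \eqref{equ: error bound}, $\|\varepsilon^p_{t+1}\|_M \le \tfrac{c(p)}{\eta}\|w_{t+1}-w_t\|_M$, and Young's inequality, so that it is absorbed into the genuine descent term $-\tfrac{1}{2\eta}\|w_{t+1}-w_t\|_M^2$ plus a small multiple of $F(w_{t+1}) - F(x^*)$; this absorption is exactly where $64\kappa^M_f c^2(p) \le 1$ is consumed. Next take expectation over $i_t$: the estimator $\tilde\nabla_t = \nabla f(w_0) + \nabla f_{i_t}(w_t) - \nabla f_{i_t}(w_0)$ is unbiased, and in the sum-of-nonconvex setting its second moment is controlled by the $L^M_f$-smoothness of the $f_i$ under $\|\cdot\|_M$ plus a Bregman term at the snapshot $w_0$, which is dominated once $\eta \le \tfrac{1}{2\sqrt m L^M_f}$ and $m \ge 4$. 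Finally take expectation over $D^k \sim \textbf{Geom}(1/m)$ and telescope over $t$; the geometric weighting collapses the per-step bounds into the displayed one-epoch inequality, which is the computation that already underlies Theorem \ref{thm: convergence of inexact preconditioned svrg}.

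\textbf{Step 2: linear coupling.} Apply the one-epoch inequality with the two coupling points $u = x^*$ and $u = \tau x^* + (1-\tau) y_k$, combine with convexity of $F$ on the segment $[y_k, x^*]$, with $\sigma^M_f$-strong convexity of $f$ (and convexity of $\psi$) in the form $F(y_k) - F(x^*) \ge \tfrac{\sigma^M_f}{2}\|y_k - x^*\|_M^2$, and with the definition $x_{k+1} = \tfrac{\frac32 y_k + \frac12 x_k - (1-\tau)y_{k-1}}{1+\tau}$, whose coefficients are tuned so that the squared-$M$-norm cross terms telescope between consecutive epochs. This yields a contraction $\mathbb{E}[\cD_{k+1}] \le \tfrac{1}{1+\tau}\cD_k$ for a Lyapunov function of the form $\cD_k = \big(F(y_k) - F(x^*)\big) + \tfrac{1}{2m\eta}\|\,\cdot\,\|_M^2$, with the squared-norm argument the appropriate Katyusha X combination of $x_k$ and $y_{k-1}$ against $x^*$. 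The choice $\tau = \tfrac12\sqrt{\tfrac12 m\eta\sigma^M_f}$ is precisely the one balancing the $\tau\,(F(y_k)-F(x^*))$ loss incurred by the coupling against the $\tfrac{\sigma^M_f}{2}\|y_k - x^*\|_M^2$ gain from strong convexity. Unrolling $K$ epochs and using that the output is $x^K = y_K$ gives \eqref{equ: inexact preconditioned Katyusha X linear convergence}; the degenerate case $\tau = \tfrac12$, where $x_{k+1}\equiv y_k$ and the method collapses to iPreSVRG, is a consistency check against Theorem \ref{thm: convergence of inexact preconditioned svrg}.

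\textbf{Main obstacle.} The crux is making the inexactness harmless at both levels at once: $\varepsilon^p_{t+1}$ is only controlled by the local displacement $\|w_{t+1}-w_t\|_M$, so it must be dissolved into per-step descent terms before either the telescoping over $t$ or the expectation over the geometric epoch length is performed, and it must be dissolved without degrading the clean mirror-descent form needed for the coupling step --- this is what forces the simultaneous thresholds $64\kappa^M_f c^2(p)\le 1$ and $\eta \le \tfrac{1}{2\sqrt m L^M_f}$. A secondary bookkeeping hazard, as in \cite{lei2017less,allen2018katyusha}, is that the geometric epoch length turns the telescoped potential into a reweighted sum, so the $(1-\tfrac1m)$ factors must be tracked carefully when handing off from the inner loop to the outer momentum recursion.
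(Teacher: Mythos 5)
Your route matches the paper's: the paper proves this by invoking Lemma~\ref{lem: lem 4 for thm 1} (the analogue of Allen-Zhu's Lemma~3.3) and then running the Katyusha~X linear-coupling potential argument of \cite{allen2018katyusha} with $\sigma$ replaced by $\sigma^M_f - \tfrac{2c(p)}{a\eta}$, finally setting $a=\tfrac{4c(p)}{\eta\sigma^M_f}$ so that $64\kappa^M_f c^2(p)\le 1$ restores $\sigma^M_f$ up to a constant; your Step~1 corresponds to Lemma~\ref{lem: lem 4 for thm 1} and your Step~2 to the coupling/potential telescoping. One imprecision is worth flagging: the one-epoch bound you display,
\begin{align*}
\mathbb{E}\big[F(y_{k+1})\big] - F(u) \le \tfrac{1}{2m\eta}\big(\|x_{k+1}-u\|_M^2 - \mathbb{E}\,\|y_{k+1}-u\|_M^2\big),
\end{align*}
is \emph{stronger} than what Lemma~\ref{lem: lem 4 for thm 1} actually gives, which is
\begin{align*}
\mathbb{E}\big[F(w_{D+1})-F(u)\big] \le \mathbb{E}\Big[-\tfrac{1}{4m\eta}\|w_{D+1}-w_0\|_M^2 + \tfrac{1}{m\eta}\langle w_0-w_{D+1},\,w_0-u\rangle_M - \big(\tfrac{\sigma^M_f}{4}-\tfrac{c(p)}{2a\eta}\big)\|w_{D+1}-u\|_M^2\Big];
\end{align*}
after the three-point expansion of the cross term, the lemma leaves a \emph{positive} residue $+\tfrac{1}{4m\eta}\|w_{D+1}-w_0\|_M^2$ that your clean mirror-descent display silently discards, and it is precisely this residue together with the cross term, evaluated at the Katyusha~X coupling points, that the momentum coefficients in $x_{k+1} = \frac{\frac32 y_k + \frac12 x_k - (1-\tau)y_{k-1}}{1+\tau}$ are tuned to telescope. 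Since your Step~2 does invoke the cross-term telescoping, you clearly have the right mechanism in mind, but the displayed inequality alone would not support the coupling argument and should be replaced by the Lemma~\ref{lem: lem 4 for thm 1} form.
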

\begin{rem}
\label{rem: M=I gives previous results}
When $M=I$, we have $c(p)=0$, and Theorems \ref{thm: convergence of inexact preconditioned svrg} and \ref{thm: convergence of inexact preconditioned Katyusha X} recovers the Theorems D.1 and 4.3 of \cite{allen2018katyusha}. 
\end{rem}

In Theorems \ref{thm: convergence of inexact preconditioned svrg} and \ref{thm: convergence of inexact preconditioned Katyusha X}, we need the number of simple subroutines $p$ to be large enough such that $64\kappa^M_f c^2(p)\leq 1$, the following Lemma provides a sufficient condition for this.

\begin{lem}
\label{lem: choice of p with FISTA with restart}
If the subproblem iterator $S$ in Procedure \ref{alg: fixed number of inner loops} is FISTA with restart every $p_0=\lceil 2e\sqrt{\kappa(M)}\rceil$ steps, and with step size $\gamma=\frac{\eta}{\lambda_{\mathrm{max}}(M)}$, then, in order for  $64\kappa^M_f c^2(p)\leq 1$ to hold, it suffices to choose
\begin{align}
p&= (2e\sqrt{\kappa(M)}+1)\ln\frac{\sqrt{\kappa^M_f}\kappa(M)+\sqrt{c_1}}{c_1}\label{equ: choice of p FISTA with restart}\\
&=\cO\bigg(\sqrt{\kappa(M)}\ln\big(\sqrt{\kappa^M_f}\kappa(M)\big)\bigg)\nonumber
\end{align}
where $c_1=\frac{1}{64*14^2}$.
\end{lem}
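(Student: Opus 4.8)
The plan is to reduce the target inequality $64\kappa^M_f c^2(p)\le 1$ to an explicit lower bound on $p$, using the closed form $c(p)=14\kappa(M)\,\tau^p/(1-\tau^p)$ from Lemma~\ref{lem: finite loops of FISTA with restart} together with the observation that $p\mapsto c(p)$ is strictly decreasing: since $\tau\in(0,1)$, both $\tau^p$ and hence $\tau^p/(1-\tau^p)$ decrease in $p$. Consequently it suffices to exhibit \emph{one} admissible value of $p$, and any larger $p$ (in particular, rounding up to the next multiple of $p_0$ to stay in the form $p=rp_0$ required by Algorithm~\ref{alg: fixed number of FISTA with restart}) only makes $c(p)$ smaller.

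First I would rewrite $64\kappa^M_f c^2(p)\le 1$ equivalently as $c(p)\le \frac{1}{8\sqrt{\kappa^M_f}}$. Noting that $\sqrt{c_1}=\frac{1}{8\cdot 14}$ with $c_1=\frac{1}{64\cdot 14^2}$, and substituting $c(p)=14\kappa(M)\frac{\tau^p}{1-\tau^p}$, this becomes $\frac{\tau^p}{1-\tau^p}\le \frac{\sqrt{c_1}}{\kappa(M)\sqrt{\kappa^M_f}}$. Since $x\mapsto \frac{x}{1-x}$ is increasing on $(0,1)$, this is in turn equivalent to $\tau^p\le \frac{\sqrt{c_1}}{\kappa(M)\sqrt{\kappa^M_f}+\sqrt{c_1}}$, i.e. $\tau^{-p}\ge \frac{\kappa(M)\sqrt{\kappa^M_f}+\sqrt{c_1}}{\sqrt{c_1}}$. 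Because $0<c_1<\sqrt{c_1}<1$, the stronger requirement $\tau^{-p}\ge \frac{\kappa(M)\sqrt{\kappa^M_f}+\sqrt{c_1}}{c_1}$ is certainly sufficient; this quantity exceeds $1$, so taking logarithms gives the clean sufficient condition $p\,(-\ln\tau)\ge \ln\frac{\kappa(M)\sqrt{\kappa^M_f}+\sqrt{c_1}}{c_1}$.

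Next I would invoke the estimate $\tau\le \exp\!\big(-\frac{1}{2e\sqrt{\kappa(M)}+1}\big)$ recorded in Lemma~\ref{lem: finite loops of FISTA with restart}, which says exactly that $-\ln\tau\ge \frac{1}{2e\sqrt{\kappa(M)}+1}$, hence $\frac{1}{-\ln\tau}\le 2e\sqrt{\kappa(M)}+1$. Combining this with the previous display, it suffices to choose $p\ge (2e\sqrt{\kappa(M)}+1)\ln\frac{\kappa(M)\sqrt{\kappa^M_f}+\sqrt{c_1}}{c_1}$, which is precisely \eqref{equ: choice of p FISTA with restart}. The asymptotic statement $p=\cO\big(\sqrt{\kappa(M)}\ln(\sqrt{\kappa^M_f}\kappa(M))\big)$ then follows because $\sqrt{c_1}$ and $c_1$ are absolute constants and $\ln(ax+b)=\cO(\ln x)$ as $x\to\infty$.

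The argument involves no nontrivial analysis; the only thing requiring care is the bookkeeping of the numerical constants ($14$, $64$, $c_1=1/(64\cdot14^2)$) and being explicit that the monotonicity of $c(\cdot)$ licenses passing from ``there is an admissible $p$'' to ``every $p$ above the stated threshold is admissible.'' So I anticipate no real obstacle beyond arithmetic and correctly chaining the two reductions (the algebraic manipulation of $\frac{x}{1-x}$, then the logarithm with the bound on $-\ln\tau$).
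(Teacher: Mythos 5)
Your proposal is correct and follows essentially the same route as the paper's proof: substitute the closed form $c(p)=14\kappa(M)\frac{\tau^p}{1-\tau^p}$, solve the resulting inequality for $\tau^p$, and convert to a bound on $p$ via $\tau\le\exp\big(-\frac{1}{2e\sqrt{\kappa(M)}+1}\big)$. You are in fact slightly more careful than the paper at one point — the exact threshold is $\tau^p\le\frac{\sqrt{c_1}}{\sqrt{\kappa^M_f}\kappa(M)+\sqrt{c_1}}$, and the paper labels the strengthened condition with $c_1$ in the numerator as ``equivalent'' whereas you correctly identify it as merely sufficient (via $c_1<\sqrt{c_1}$), which is all that is needed.
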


With \eqref{equ: inexact preconditioned svrg linear convergence}, \eqref{equ: inexact preconditioned Katyusha X linear convergence}, and \eqref{equ: choice of p FISTA with restart}, we can now calculate the gradient complexities of Algorithm \ref{alg: inexact preconditioned svrg} and Algorithm \ref{alg: inexact preconditioned Katyusha X}, but let us first do that for SVRG and Katyusha X.

In Assumption \ref{Assumption 1}, we have assumed that $\prox_{\eta \psi}(\cdot)$ is cheap to evaluate, therefore, each epoch of SVRG needs $n+m$ gradient evaluations, which is also true for Katyusha X. As a result, the gradient complexity for SVRG and Katyusha X to reach $\varepsilon-$suboptimality are:
\begin{align}
C_1(m, \varepsilon)=\cO(\frac{n+m}{\ln(1+\frac{1}{4}m\eta \sigma)}\ln{\frac{1}{\varepsilon}}),\label{equ: svrg gradient complexity}\\
C_2(m, \varepsilon)=\cO(\frac{n+m}{\ln(1+\frac{1}{2}\sqrt{\frac{1}{2}m\eta \sigma})}\ln{\frac{1}{\varepsilon}}).\label{equ: Katyusha X gradient complexity}
\end{align}
For Algorithm \ref{alg: inexact preconditioned svrg} and Algorithm \ref{alg: inexact preconditioned Katyusha X}, each iteration in Procedure \ref{alg: fixed number of inner loops} is at most as expensive as $d$ gradient computations\footnotemark[1] and is operated $p$ times, therefore, one epoch of iPreSVRG/iPreKatX needs at most $n+(1+pd)m$ gradient computations.

Consequently, we can write the the gradient complexity for Algorithm \ref{alg: inexact preconditioned svrg} and Algorithm \ref{alg: inexact preconditioned Katyusha X} to reach $\varepsilon-$suboptimality as:
\begin{align}
C'_1(m, \varepsilon)=\cO(\frac{n+(1+pd)m}{\ln(1+\frac{1}{4}m\eta \sigma^M)}\ln{\frac{1}{\varepsilon}}),\label{equ: inexact preconditioned svrg gradient complexity}\\
C'_2(m, \varepsilon)=\cO(\frac{n+(1+pd)m}{\ln(1+\frac{1}{2}\sqrt{\frac{1}{2}m\eta \sigma^M})}\ln{\frac{1}{\varepsilon}}).\label{equ: inexact preconditioned Katyusha X gradient complexity}
\end{align}
\begin{rem}
\begin{enumerate}
    \item According to Lemma \ref{lem: choice of p with FISTA with restart}, when $S$ is FISTA with restart, it suffices to choose $p$ by \eqref{equ: choice of p FISTA with restart}.
    \item When the preconditioner $M$ is chosen appropriately, the step size $\eta$ in \eqref{equ: inexact preconditioned svrg gradient complexity} and \eqref{equ: inexact preconditioned Katyusha X gradient complexity} can be much larger than that of \eqref{equ: svrg gradient complexity} and \eqref{equ: Katyusha X gradient complexity}.
\end{enumerate}
\end{rem}

Finally, we can compare $C_1(m, \varepsilon)$, $C_2(m, \varepsilon)$ with $C'_1(m, \varepsilon)$, $C'_2(m, \varepsilon)$, respectively. It turns out that there is a significant speedup when $\kappa>n^{\frac{1}{2}}$.

\footnotetext[1]{For each iteration of Procedure \ref{alg: fixed number of inner loops}, the most expensive step is multiplying $M$ to some vector, which is often cheaper than $d$ gradient computations.}

\begin{thm}
\label{thm: inexact preconditioned svrg vs svrg}
Take Assumption \ref{Assumption 1}. Let the iterator $S$ in Procedure \ref{alg: fixed number of inner loops} be FISTA with restart, and an appropriate preconditioner $M$ is chosen such that $\kappa_f$ and $\kappa(M)$ are of the same order, and $\kappa^M_f$ is small compared to them, then
\begin{enumerate}
\item if $\kappa_f>n^{\frac{1}{2}}$ and $\kappa_f<n^{2}d^{-2}$, then
    \begin{align}
    \label{equ: compare svrg gradient complexities case 1}
    \frac{\min_{m\geq 1}C_1'(m,\varepsilon)}{\min_{m\geq 1}C_1(m,\varepsilon)}\leq
    \cO\big(\frac{n^{\frac{1}{2}}}{\kappa_f}\big).
    \end{align}
    \item if $\kappa_f > n^{\frac{1}{2}}$ and $\kappa_f>n^{2}d^{-2}$, then
    \begin{align}
    \label{equ: compare svrg gradient complexities case 2}
    \frac{\min_{m\geq 1}C_1'(m,\varepsilon)}{\min_{m\geq 1}C_1(m,\varepsilon)}\leq \cO(\frac{d}{\sqrt{n\kappa_f}}).
    \end{align}
\end{enumerate}
\end{thm}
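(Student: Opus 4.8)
The plan is to estimate (up to constants) $\min_{m}C_1(m,\varepsilon)$ and to upper bound $\min_{m}C_1'(m,\varepsilon)$, and then divide. First the denominator. In \eqref{equ: svrg gradient complexity} the step size is free subject to $\eta\le\frac1{2\sqrt m L_f}$ (the $M=I$ case of Theorem~\ref{thm: convergence of inexact preconditioned svrg}), and enlarging $\eta$ only enlarges $\tfrac14 m\eta\sigma$, so I take $\eta=\frac1{2\sqrt m L_f}$, making $\tfrac14 m\eta\sigma=\frac{\sqrt m}{8\kappa_f}$ and $C_1(m,\varepsilon)=\Theta\!\big(\frac{n+m}{\ln(1+\sqrt m/(8\kappa_f))}\ln\frac1\varepsilon\big)$. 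For a lower bound valid at \emph{every} $m$, I use $\ln(1+x)\le x$ and then AM--GM, $n+m\ge 2\sqrt{nm}$: $\frac{n+m}{\ln(1+\sqrt m/(8\kappa_f))}\ge \frac{8\kappa_f(n+m)}{\sqrt m}\ge 16\kappa_f\sqrt n$, so $\min_m C_1(m,\varepsilon)=\Omega(\sqrt n\,\kappa_f\ln\frac1\varepsilon)$; evaluating at $m=n$ (legitimate for $n\ge4$) and using $\sqrt n/(8\kappa_f)<\tfrac18$ because $\kappa_f>\sqrt n$, so that $\ln(1+\sqrt n/(8\kappa_f))=\Theta(\sqrt n/\kappa_f)$, matches this; hence $\min_m C_1(m,\varepsilon)=\Theta(\sqrt n\,\kappa_f\ln\frac1\varepsilon)$.

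Next I bound the numerator from above. By Lemma~\ref{lem: choice of p with FISTA with restart}, since $\kappa(M)$ and $\kappa_f$ are of the same order and $\kappa^M_f$ is (treated as) a bounded quantity, the number of inner subroutines satisfies $p=\cO\big(\sqrt{\kappa(M)}\ln(\sqrt{\kappa^M_f}\kappa(M))\big)=\widetilde\cO(\sqrt{\kappa_f})$, where $\widetilde\cO$ suppresses polylogarithmic factors in $\kappa_f$; set $q\coloneqq 1+pd=\widetilde\cO(d\sqrt{\kappa_f})$. In \eqref{equ: inexact preconditioned svrg gradient complexity} take again the largest step $\eta=\frac1{2\sqrt m L^M_f}$, so $\tfrac14 m\eta\sigma^M=\frac{\sqrt m}{8\kappa^M_f}\ge\frac1{8\kappa^M_f}=\Omega(1)$ for $m\ge1$; the denominator is thus $\Omega(1)$ and $C_1'(m,\varepsilon)=\cO\big((n+qm)\ln\frac1\varepsilon\big)$. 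The two regimes of the theorem are exactly whether $q\lesssim n$ or $q\gtrsim n$, i.e. (up to the polylog in $p$) whether $d\sqrt{\kappa_f}<n$, i.e. $\kappa_f<n^2d^{-2}$, or $d\sqrt{\kappa_f}>n$, i.e. $\kappa_f>n^2d^{-2}$. In Case~1 choose $m=\max\{4,\lceil n/q\rceil\}$: then $qm=\cO(n)$, so $\min_m C_1'(m,\varepsilon)=\cO(n\ln\frac1\varepsilon)$ and $\frac{\min C_1'}{\min C_1}=\cO\big(\frac{n}{\sqrt n\,\kappa_f}\big)=\cO\big(\frac{\sqrt n}{\kappa_f}\big)$, which is \eqref{equ: compare svrg gradient complexities case 1}. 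In Case~2 choose $m=4$: then $n+qm=\cO(q)$, so $\min_m C_1'(m,\varepsilon)=\cO(q\ln\frac1\varepsilon)=\widetilde\cO(d\sqrt{\kappa_f}\ln\frac1\varepsilon)$ and $\frac{\min C_1'}{\min C_1}=\cO\big(\frac{d\sqrt{\kappa_f}}{\sqrt n\,\kappa_f}\big)=\cO\big(\frac{d}{\sqrt{n\kappa_f}}\big)$, which is \eqref{equ: compare svrg gradient complexities case 2}.

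The one place that needs care is the lower bound $\min_m C_1=\Omega(\sqrt n\,\kappa_f\ln\frac1\varepsilon)$: it must hold \emph{uniformly} in $m$, so I cannot simply evaluate at a convenient $m$ but must combine $\ln(1+x)\le x$ with AM--GM as above, and it is precisely the hypothesis $\kappa_f>\sqrt n$ that keeps the logarithm's argument below $1$ at the matching point $m=n$. Everything else is bookkeeping: the $m\ge4$ clipping, the $\Theta(1)$ versus $1+pd$ constant in the per-epoch gradient count, and absorbing the $\ln\kappa_f$ coming from $p$ together with the (assumed bounded) $\kappa^M_f$ into polylogarithmic factors — which is also why the case boundary $\kappa_f\asymp n^2d^{-2}$ should be read up to polylogs.
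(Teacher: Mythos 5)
Your proposal is correct and follows essentially the same route as the paper: take the largest admissible step sizes, identify $\min_m C_1=\Theta(\kappa_f\sqrt n\,\ln\frac1\varepsilon)$, bound $C_1'$ at $m\approx n/(1+pd)$ (resp.\ $m=O(1)$) in the two regimes determined by $1+pd\lessgtr n$, and divide. The only real difference is that you obtain the needed lower bound on $\min_m C_1$ uniformly in $m$ via $\ln(1+x)\le x$ and AM--GM, which is cleaner and more directly what the ratio bound requires than the paper's calculus localization of the optimizer $m^{\star}\in(n/c_0,n)$.
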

\begin{thm}
\label{thm: inexact preconditioned Katyusha X vs Katyusha X}
Take Assumption \ref{Assumption 1}. Let the iterator $S$ in Procedure \ref{alg: fixed number of inner loops} be FISTA with restart, and an appropriate preconditioner $M$ is chosen such that $\kappa_f$ and $\kappa(M)$ are of the same order, and $\kappa^M_f$ is small compared to them, then
\begin{enumerate}
\item if $\kappa_f>n^{\frac{1}{2}}$ and $\kappa_f<n^{2}d^{-2}$, then
    \begin{align}
    \label{equ: compare katyusha X complexities case 1}
    \frac{\min_{m\geq 1}C_2'(m,\varepsilon)}{\min_{m\geq 1}C_2(m,\varepsilon)}\leq
    \cO\big(\sqrt{\frac{n^{\frac{1}{2}}}{\kappa_f}}\big).
    \end{align}
    \item If $\kappa_f>n^{\frac{1}{2}}$ and $\kappa_f>n^{2}d^{-2}$, then
    \begin{align}
    \label{equ: compare katyusha X gradient complexities case 2}
    \frac{\min_{m\geq 1}C_2'(m,\varepsilon)}{\min_{m\geq 1}C_2(m,\varepsilon)}\leq
    \cO(\frac{d}{n^{\frac{3}{4}}}).
    \end{align}
\end{enumerate}
\end{thm}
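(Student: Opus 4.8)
The plan is to carry out the same elementary optimization over the epoch length $m$ that one would use for Theorem~\ref{thm: inexact preconditioned svrg vs svrg}, but now with the accelerated per-epoch factor $1+\tfrac12\sqrt{\tfrac12 m\eta\sigma}$ in place of $1+\tfrac14 m\eta\sigma$. I will use the hypotheses $\kappa(M)=\Theta(\kappa_f)$ and that $\kappa^M_f$ may be treated as a constant (the content of ``$\kappa^M_f$ is small''), substituting the prescribed step sizes $\eta=\Theta(\tfrac1{\sqrt m L_f})$ into \eqref{equ: Katyusha X gradient complexity} and $\eta=\Theta(\tfrac1{\sqrt m L^M_f})$ into \eqref{equ: inexact preconditioned Katyusha X gradient complexity}.

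\textit{Step 1 (minimize $C_2$).} With $\eta=\Theta(\tfrac1{\sqrt m L_f})$ we get $\tfrac12\sqrt{\tfrac12 m\eta\sigma}=\Theta(m^{1/4}\kappa_f^{-1/2})$, which is bounded by a constant whenever $m\lesssim n$ because $\kappa_f>n^{1/2}$; hence $\ln(1+x)=\Theta(x)$ there and $C_2(m,\varepsilon)=\Theta\big(\tfrac{(n+m)\sqrt{\kappa_f}}{m^{1/4}}\ln\tfrac1\varepsilon\big)$. Minimizing $nm^{-1/4}+m^{3/4}$ gives $m=\Theta(n)$ with value $\Theta(n^{3/4}\sqrt{\kappa_f}\ln\tfrac1\varepsilon)$. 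For $m\gtrsim\kappa_f^2$ the factor saturates to $\Theta(1)$, the epoch cost becomes $\Theta(m)\gtrsim\Theta(\kappa_f^2)$, and $\kappa_f^2\ge n^{3/4}\sqrt{\kappa_f}$ exactly when $\kappa_f>n^{1/2}$, so no larger $m$ helps; thus $\min_{m\ge1}C_2(m,\varepsilon)=\Theta(n^{3/4}\sqrt{\kappa_f}\ln\tfrac1\varepsilon)$.

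\textit{Step 2 (bound $\min_m C_2'$).} By Lemma~\ref{lem: choice of p with FISTA with restart} and $\kappa(M)=\Theta(\kappa_f)$, the requirement $64\kappa^M_f c^2(p)\le1$ is met with $p=\cO(\sqrt{\kappa_f}\ln\kappa_f)$, so by \eqref{equ: inexact preconditioned Katyusha X gradient complexity} one epoch of iPreKatX costs $n+(1+pd)m$ with $1+pd=\cO(d\sqrt{\kappa_f}\ln\kappa_f)$. Take $m^{\star}=\max\{4,\lceil n/(1+pd)\rceil\}$ and $\eta=\Theta(\tfrac1{\sqrt{m^{\star}}L^M_f})$; since $m^{\star}\ge4$ and $\kappa^M_f=\Theta(1)$, the factor $\tfrac12\sqrt{\tfrac12 m^{\star}\eta\sigma^M}=\Theta((m^{\star})^{1/4}(\kappa^M_f)^{-1/2})$ is $\Omega(1)$, so $\Theta(\ln\tfrac1\varepsilon)$ epochs suffice, each costing $n+(1+pd)m^{\star}=\cO(n+d\sqrt{\kappa_f}\ln\kappa_f)$. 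Hence $\min_{m\ge1}C_2'(m,\varepsilon)=\cO\big((n+d\sqrt{\kappa_f})\ln\tfrac1\varepsilon\big)$, up to logarithmic factors.

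\textit{Step 3 (the ratio).} Dividing the two bounds gives $\tfrac{\min_m C_2'(m,\varepsilon)}{\min_m C_2(m,\varepsilon)}=\cO\big(\tfrac{n+d\sqrt{\kappa_f}}{n^{3/4}\sqrt{\kappa_f}}\big)$, and I split on the sign of $n-d\sqrt{\kappa_f}$. If $\kappa_f<n^2d^{-2}$ then $d\sqrt{\kappa_f}<n$, the numerator is $\Theta(n)$, and the bound is $\cO(n^{1/4}\kappa_f^{-1/2})=\cO(\sqrt{n^{1/2}/\kappa_f})$, which is \eqref{equ: compare katyusha X complexities case 1}. If $\kappa_f>n^2d^{-2}$ then $d\sqrt{\kappa_f}>n$, the numerator is $\Theta(d\sqrt{\kappa_f})$, and the bound is $\cO(d/n^{3/4})$, which is \eqref{equ: compare katyusha X gradient complexities case 2}. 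I expect Step~1 to be the main obstacle: one must confirm over the entire range $m\ge1$ — including the regime where $\tfrac12\sqrt{\tfrac12 m\eta\sigma}\ge1$ and $\ln(1+x)$ is no longer comparable to $x$ — that the complexity is unimodal with minimum at $m=\Theta(n)$, and it is precisely the comparison $\kappa_f^2$ versus $n^{3/4}\sqrt{\kappa_f}$ that singles out $\kappa_f>n^{1/2}$ as the threshold. A lesser nuisance is carrying the logarithmic factors in $p=\cO(\sqrt{\kappa_f}\ln\kappa_f)$ and pinning down the quantitative sense of ``$\kappa^M_f$ small'' (essentially $d\sqrt{\kappa_f}\ln\kappa_f\lesssim n$ in Case~1) under which they are absorbed into the stated $\cO(\cdot)$.
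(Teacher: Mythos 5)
Your proposal is correct and follows essentially the same route the paper intends: the paper omits the proof of this theorem, stating only that it is "similar" to Theorem~\ref{thm: inexact preconditioned svrg vs svrg}, and your argument is exactly that parallel — optimizing $m$ in $C_2$ via the approximation $\ln(1+x)=\Theta(x)$ for $x\lesssim 1$, which holds for $m\lesssim n$ because $\kappa_f>n^{1/2}$, yielding $\min_m C_2=\Theta(n^{3/4}\sqrt{\kappa_f}\ln\tfrac1\varepsilon)$, then taking $m'=\lceil n/(1+pd)\rceil$ (floored at $4$) in $C_2'$ as the paper does for $C_1'$, and splitting on the sign of $n-d\sqrt{\kappa_f}$. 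Your checks at the boundaries (that $m\gtrsim\kappa_f^2$ gives cost $\gtrsim\kappa_f^2\ge n^{3/4}\sqrt{\kappa_f}$ exactly when $\kappa_f\ge n^{1/2}$, and that the $\ln\kappa_f$ factor in $p$ and the heuristic $\kappa_f\approx\kappa(M)$, $\kappa^M_f=\Theta(1)$ are absorbed into the $\cO(\cdot)$) are at the same level of rigor as the paper's proof of Theorem~\ref{thm: inexact preconditioned svrg vs svrg}, which likewise suppresses the logarithmic factor in $p$ when writing $n>1+pd$ as equivalent to $\kappa_f<n^2d^{-2}$.
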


In Section \ref{sec: experiments}, we provide practical choices of $M$ for Lasso and Logistic regression.

\section{Experiments}
\label{sec: experiments}

To investigate the practical performance of Algorithms \ref{alg: inexact preconditioned svrg} and \ref{alg: inexact preconditioned Katyusha X}, we test on three problems: Lasso, logistic regression, and a synthetic sum-of-nonconvex problem. For the first two, each function in the finite sum is convex. To guarantee that the objective is strongly convex, a small $\ell_2-$regularization is added to Lasso and logistic regression. 

In the following, we compare SVRG, iPreSVRG, Katyusha X, and iPreKatX on four datasets from LIBSVM\footnote{\url{https://www.csie.ntu.edu.tw/~cjlin/libsvmtools/datasets/}}: \texttt{w1a.t} (47272 samples, 300 features), \texttt{protein} (17766 samples, 357 features), \texttt{cod-rna.t} (271617 samples, 8 features), \texttt{australian} (690 samples, 14 features), and one synthetic dataset. The implementation settings are listed below,
\begin{enumerate}
    \item We choose the epoch length $m=100$ in all experiments, since we found that the choices $m\in\{\frac{n}{4}, \frac{n}{2}, n\}$ need more gradient evaluations.
    \item For iPrePDHG and iPreKatX, we use FISTA as the subproblem iterator $S$. If the preconditioner $M$ is diagonal, then the number of subroutines for solving the subproblem is $p=1$, if not, then we set $p=20$.
    \item In all the experiments, we tune the step size $\eta$ and momentum weight $\tau$ to their optimal.
    \item All algorithms are initialized at $x^0=\vz$.
    \item All algorithms are implemented in Matlab R2015b. To be fair, except for the subproblem routines for inexact preconditioning, the other parts of the code are identical in all algorithms. The experiments are conducted on a Windows system with Intel Core i7 2.6 GHz CPU.
    The code is available at: 
    \[\text{\url{https://github.com/uclaopt/IPSVRG}}.\]
\end{enumerate}
\subsection{Lasso}
We formulate Lasso as
\begin{equation}
\label{equ: lasso}
    \mathop{\mathrm{minimize}}_{x\in\Rd} ~\frac{1}{2n}\sum_{i=1}^n ~(a_i^Tx-b_i)^2 + \lambda_1\|x\|_1+\lambda_2\|x\|_2^2,
\end{equation}
where $a_i\in\mathbb{R}^d$ are feature vectors and $b_i\in\mathbb{R}$ are labels.
Note that the first term is equivalent to $\frac{1}{2n}\|Ax-b\|^2$, where $A=(a_1,a_2,...,a_n)^T\in\mathbb{R}^{n\times d}$ and $b=(b_1, b_2, \dots, b_n)\in\mathbb{R}^n$.

For Lasso as in \eqref{equ: lasso}, we provide two choices of preconditioner $M$,
\begin{enumerate}
    \item When $d$ is small, we choose
    \begin{align*}
    M_1=\frac{1}{n}A^TA,
\end{align*}
this is the exact Hessian of the smooth part of the objective.
    \item When $d$ is large and $A^TA$ is diagonally dominant, we choose
    \begin{align*}
        M_2=\frac{1}{n}\text{diag}(A^T A)+\alpha I,
    \end{align*}
    where $\alpha>0$. In this case, the subproblem \eqref{equ: subproblem} can be solved exactly with $p=1$ iteration.
\end{enumerate}
Our numerical results are presented in the following figures. We didn't observe significant accelerations of Katyusha X over SVRG and iPreKatX over iPrePDHG, and we suspect the reason is that $m=100$ and the optimal choices of step size $\eta$ make $m\eta\sigma_f>1$ or $m\eta\sigma^M_f>1$, thus the complexity in \eqref{equ: Katyusha X gradient complexity} and \eqref{equ: inexact preconditioned Katyusha X gradient complexity} are not better than \eqref{equ: svrg gradient complexity} and \eqref{equ: inexact preconditioned svrg gradient complexity}, respectively.
\begin{figure}[H]
    \centering
    \includegraphics[width=.5\textwidth]{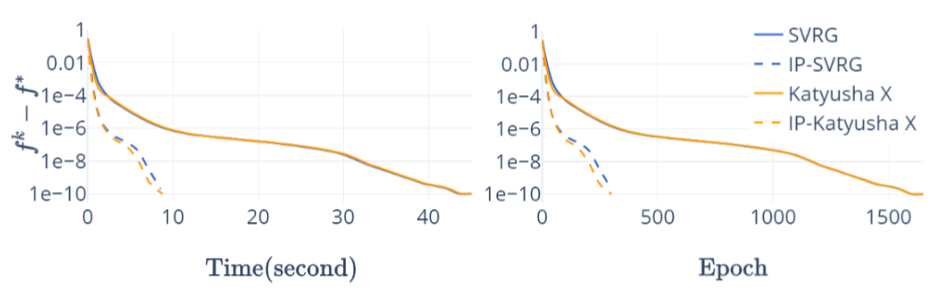}
    \caption{Lasso on \texttt{w1a.t}, $(n,d)=(47272, 300)$, $\lambda_1=10^{-3}, \lambda_2 = 10^{-8}$. For iPreSVRG and iPreKatX: $\eta_1=0.005$; For SVRG and Katyusha X: $\eta_2=0.08$; For Katyusha X and iPreKatX: $\tau=0.45$, $M=M_2$ with $\alpha = 0.01$.}
    \label{fig: new lasso w1at}
\end{figure}
\begin{figure}[H]
    \centering
    \includegraphics[width=.5\textwidth]{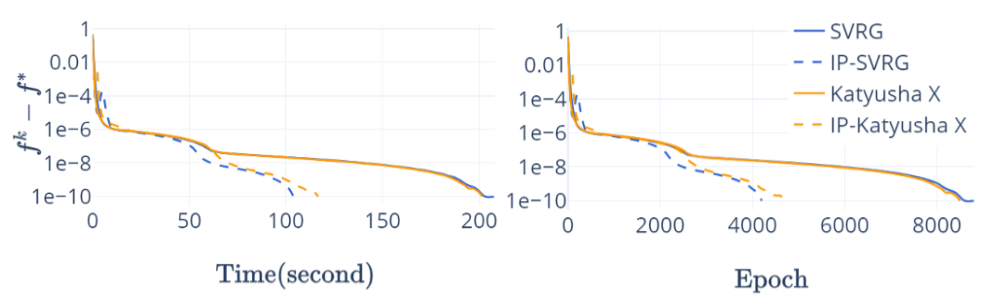}
    \caption{Lasso on \texttt{protein}, $(n,d)=(17766, 357)$, $\lambda_1=10^{-4}, \lambda_2 = 10^{-6}$,  $\eta_1=0.008$, $\eta_2=0.2$, $\tau=0.2$, $M=M_2$ with $\alpha =0.008$.}
    \label{fig: new lasso protein}
\end{figure}
\begin{figure}[H]
    \centering
    \includegraphics[width=.5\textwidth]{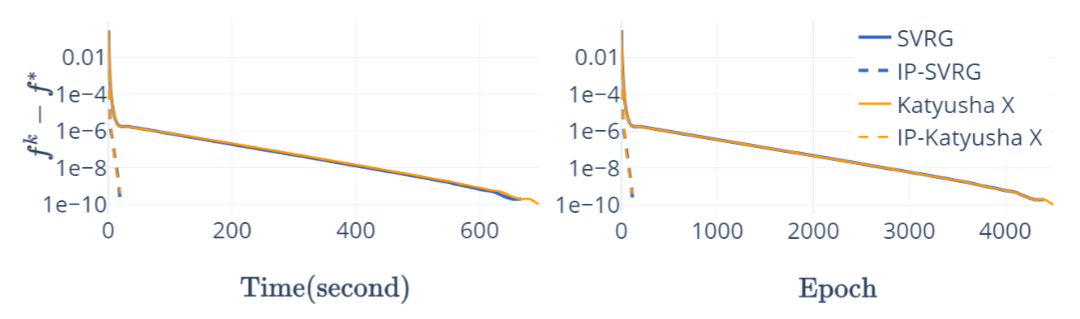}
    \caption{Lasso on \texttt{cod-rna.t}, $(n,d)=(271617, 8)$, $\lambda_1=10^{-2}, \lambda_2 = 1$, $\eta_1=1$, $\eta_2=5\times 10^{-6}$, $\tau=0.45$, $M=M_1$, subproblem iterator step size $\gamma=3\times 10^{-6}$.}
    \label{fig: new lasso codrnat}
\end{figure}

\begin{figure}[H]
     \centering
     \includegraphics[width=.5\textwidth]{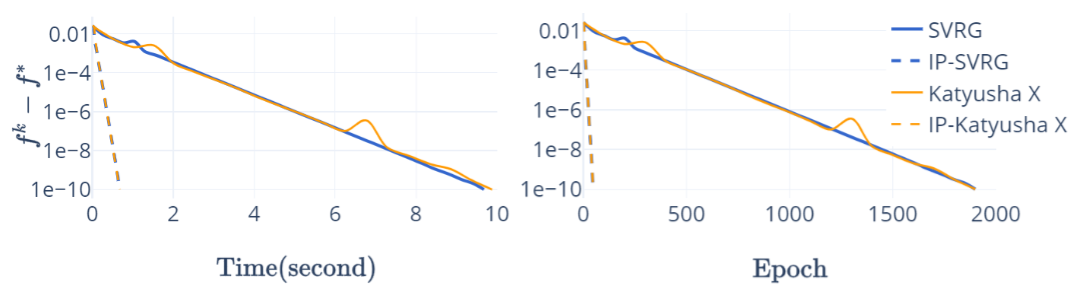}
     \caption{ Lasso on \texttt{australian}, $(n,d)=(690,14)$, $\lambda_1=2, \lambda_2 = 10^{-8}$, $\eta_1=0.01$, $\eta_2=8\times10^{-10}$, $\tau=0.49$, $M=M_1$,$\gamma=5\times 10^{-10}$.}
     \label{fig: new lasso australian}
\end{figure}

\subsection{Logistic Regression}
We formulate Logistic regression as 
\begin{equation}
\label{equ: logistic}
    \mathop{\mathrm{minimize}}_{x\in\Rd} ~\frac{1}{n}\sum_{i=1}^n \ln\big(1+\exp(-b_i \cdot a_i^T x)\big) + \lambda_1\|x\|_1+\lambda_2\|x\|_2^2,
\end{equation}
where again $a_i\in\mathbb{R}^d$ are feature vectors and $b_i\in\mathbb{R}$ are labels.

For Logistic regression as in \eqref{equ: logistic}, the Hessian of the smooth part can be expressed as
\[
H=\frac{1}{n}\sum_{i=1}^n\frac{\exp(-b_i a_i^T x)}{\big(1+\exp(-b_i a_i^T x)\big)^2}b_i^2a_i a_i^T\preccurlyeq \frac{1}{4n} B^T B,
\]
where $B=\text{diag}(b)A=\text{diag}(b)(a_1,a_2,...,a_n)^T$.
Inspired by this\footnotemark[1], we provide two choices of preconditioner $M$, 
\footnotetext[1]{Here is a heuristic justification: By Definition \ref{def: smoothness} we know that $L^M_{f}=1$; Since $\frac{\exp(-b_i a_i^T x)}{\big(1+\exp(-b_i a_i^T x)\big)^2}\rightarrow 0$ only when $x$ is unbounded, we know that if the iterates $x^k$ of our algorithms are bounded, then $H(x^k)\succcurlyeq \frac{c}{n} B^TB$ for some $c>0$, which gives $\sigma^M_f=4c$ according to Definition \ref{def: strong convexity}. When $c$ is not too small, one can expect $\kappa^M_f=\frac{1}{4c}\ll \kappa_f$.}
\begin{enumerate}
    \item When $d$ is small, we choose
    \begin{align*}
    M_1=\frac{1}{4n}B^TB.
    \end{align*}
    \item When $d$ is large and $B^TB$ is diagonally dominant, we choose
    \begin{align*}
        M_2=\frac{1}{4n}\text{diag}(B^T B)+\alpha I,
    \end{align*}
    where $\alpha>0$. In this case, the subproblem \eqref{equ: subproblem} can be solved exactly with $p=1$ iteration.
\end{enumerate}
Our results are presented in the following figures, again, we didn't observe a significant acceleration of Katyusha X over SVRG and iPreKatX over iPrePDHG, due to the same reason mentioned in the last subsection.
\begin{figure}[H]
    \centering
    \includegraphics[width=.5\textwidth]{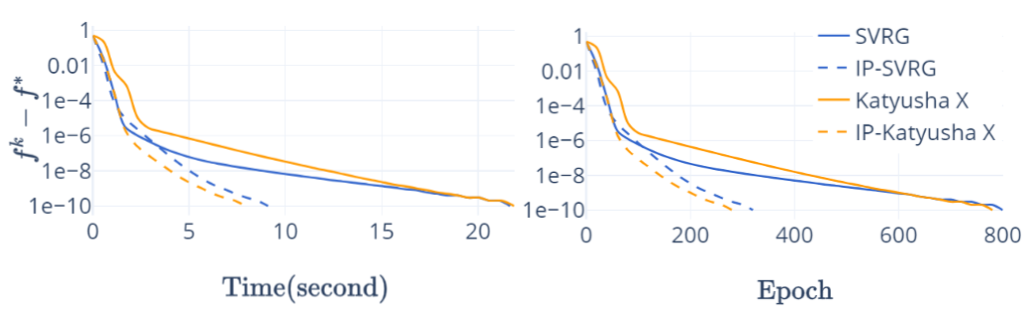}
    \caption{Logistic regression on \texttt{w1a.t}, $(n,d)=(47272, 300)$, $\lambda_1=5\times 10^{-4}, \lambda_2 = 10^{-8}$, $\eta_1=0.06$, $\eta_2=4$, $\tau=0.4$, $M=M_2$ with $\alpha =0.005$.}
    \label{fig: logistic w1at}
\end{figure}

\begin{figure}[H]
    \centering
    \includegraphics[width=.5\textwidth]{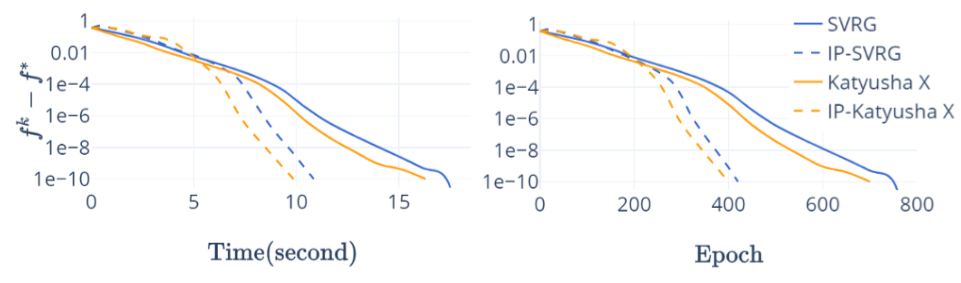}
    \caption{Logistic regression on \texttt{protein}, $(n,d)=(17766, 357)$, $\lambda_1=10^{-4}, \lambda_2 = 10^{-8}$, $\eta_1=1.5$, $\eta_2=10$, $\tau=0.3$, $M=M_2$ with  $\alpha =0.05$.}
    \label{fig: logistic protein}
\end{figure}

\begin{figure}[H]
    \centering
    \includegraphics[width=.5\textwidth]{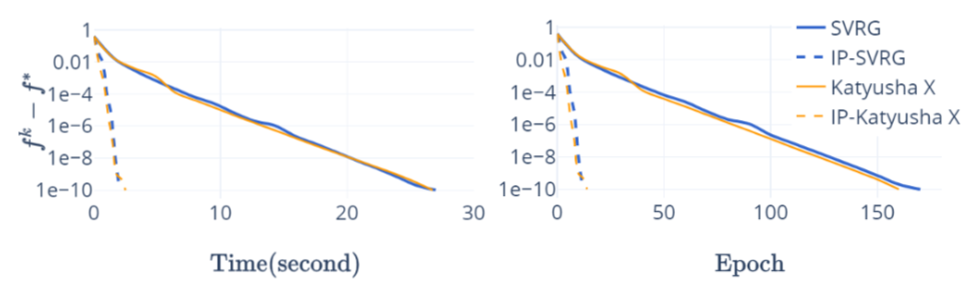}
    \caption{Logistic regression on \texttt{cod-rna.t}, $(n,d)=(271617, 8)$, $\lambda_1=0.1, \lambda_2 = 10^{-8}$, $\eta_1=1$, $\eta_2=3\times 10^{-5}$, $\tau=0.4$, $M=M_1$, $\gamma=2\times 10^{-5}$.}
    \label{fig: logistic codrnat}
\end{figure}

\begin{figure}[H]
    \centering
    \includegraphics[width=.5\textwidth]{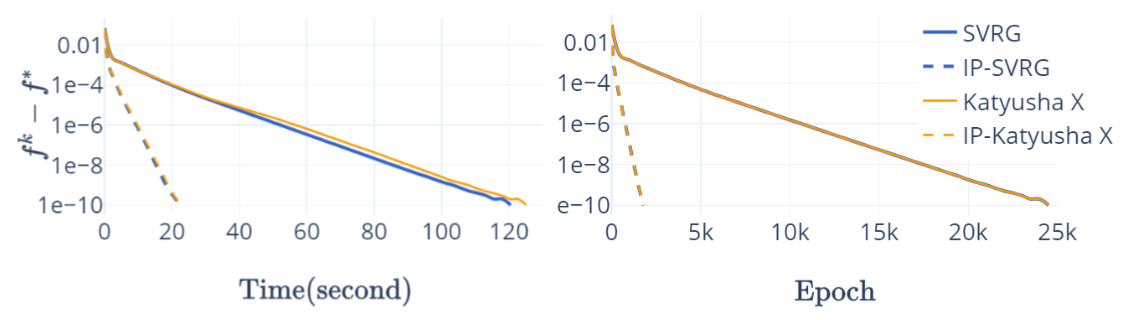}
    \caption{Logistic regression on \texttt{australian}, $(n,d)=(690,14)$, $\lambda_1=0.5$, $\lambda_2=10^{-8}$, $\eta_1=1$, $\eta_2=10^{-6}$, $\tau=0.2$, $M=M_1$, $\gamma=2\times 10^{-7}$.}
    \label{fig: logistic australian}
\end{figure}

\subsection{Sum-of-nonconvex Example}
Similar to \cite{allen2016improved}, we generate a sum-of-nonconvex example by the following procedure:

We take $n$ normalized random vector $a_i\in\Rd$, and also $d$ vectors of the form $g_i=(0,...0,5i,0,...0)$, where the nonzero element is at $i$th coordinate.

And the sum-of-nonconvex problem is given by
\begin{equation}
\label{equ: sun of nonconvex}
    \mathop{\mathrm{minimize}}_{x\in\Rd} ~\frac{1}{2n}\sum_{i=1}^n x^T(c_ic_i^T+D_i)x + b^Tx + \lambda_1\|x\|_1,
\end{equation}
where $n=2000, d=100$, and $\lambda_1=10^{-3}$.
\begin{align*}
    c_i=\begin{cases}
    a_i+g_i \quad i=1,2,...,d,\\
    a_i \quad \quad\quad\text{otherwise}.
    \end{cases}
\end{align*}
\begin{align*}
    D_i=\begin{cases}
    -100 I \quad i=1,2,...,\frac{n}{2},\\
    100 I \quad \quad\text{otherwise}.
    \end{cases}
\end{align*}

Since the sum of $D_i$'s is $0$, they do not affect the condition number of the whole problem. However, it makes most of the first half of $f_i$ to be highly nonconvex. Overall, the condition number of this problem is equal to that of $\sum_{i=1}^n c_i c_i^T$, which is approximately 10000 in our tested data.

Since $\sum_{i=1}^n c_i c_i^T$ is diagonally dominant, we select $M=\text{diag}(\frac{1}{n}\sum_{i=1}^n c_i c_i^T)+\alpha I$ as the preconditioner. Our algorithms also have significant acceleration in this sum-of-nonconvex setting.

\begin{figure}[H]
    \centering
    \includegraphics[width=.5\textwidth]{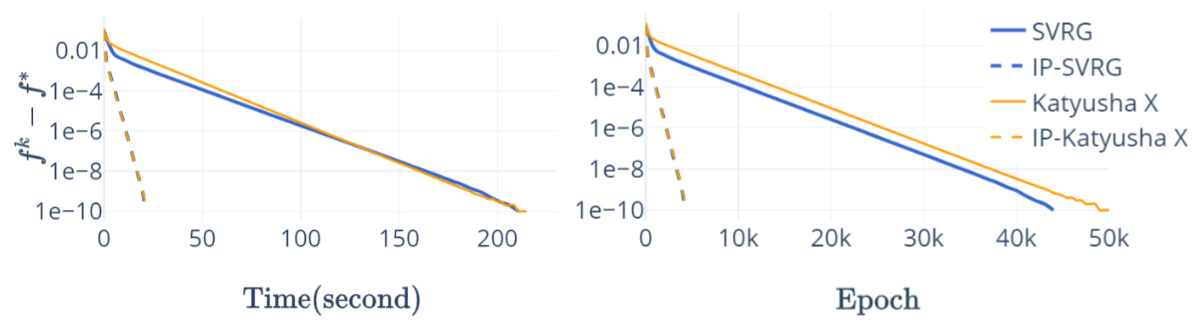}
    \caption{Sum-of-nonconvex on synthetic data. $\lambda_1=10^{-3}$,  $\alpha=15$. $\eta_1=0.015$, $\eta_2=10^{-4}$, $\tau=0.45$.}
    \label{fig: pca}
\end{figure}

\section{Conclusions and Future Work}
\label{sec: conclusions and future work}

In this paper, we propose to accelerate SVRG and Katyusha X by inexact preconditioning, with an appropriate preconditioner, both can be provably accelerated in terms of iteration complexity and gradient complexity. Our algorithms admits a nondifferentiable regularizer, as well as nonconvexity of individual functions. We confirm our theoretical results on Lasso, Logistic regression, and a sum-of-nonconvex example, where simple choices of preconditioners lead to significant accelerations.

There are still open questions left for us to address in the future: (a) Do we have theoretical guarantee when the subproblem iterator $S$ is chosen as faster schemes such as 
APCG \cite{lin2014accelerated}, NU\_ACDM \cite{allen2016even}, and A2BCD \cite{hannah2018texttt}? (b) In general, how to choose a simple preconditioner that can greatly reduce the condition number of the problem? (c) Is it possible to apply this inexact preconditioning technique to other stochastic algorithms?

\section*{Acknowledgements}
We would like to thank Yunbei Xu for helpful discussions on the idea of inexact preconditioning. We also thank the reviewers for their valuable comments.

This work is supported in part by the National Key R$\&$D Program of China 2017YFB02029, AFOSR MURI FA9550-18-1-0502, NSF DMS-1720237, and ONR N0001417121.

\bibliography{reference}
\bibliographystyle{icml2019}

\newpage
\twocolumn
\appendix

\section{Proof of Lemma \ref{lem: finite loops of FISTA with restart}}
\label{app: subproblem iterators}

In this section, we prove the results on the error generated when solving the subproblem \eqref{equ: subproblem} inexactly by Procedure \ref{alg: fixed number of inner loops}. Before proving Lemma \ref{lem: finite loops of FISTA with restart}, we will first prove a simpler case in Lemma \ref{lem: fixed number of GD}, where the subproblem iterator $S$ is the proximal gradient step.

\begin{lem}
\label{lem: fixed number of GD}
Take Assumption \ref{Assumption 1}. Suppose in Procedure \ref{alg: fixed number of inner loops}, we choose $S$ as the proximal gradient step with step size $\gamma=\eta\frac{\lambda_{\mathrm{min}}(M)}{\lambda_{\mathrm{max}}^2(M)}$, and is repeat it $p$ times, where $p \geq 1$. Then, $w_{t+1}=w_{t+1}^p$ is an approximate solution to \eqref{equ: subproblem} that satisfies
\begin{align}
\vz \in &\partial \psi(w_{t+1})+\frac{1}{\eta}M(w_{t+1}-w_t)+\tilde{\nabla}_t+M\varepsilon^p_{t+1},\label{equ: inexact optimality condition 1}\\
&\|\varepsilon^p_{t+1}\|_M\leq\frac{c(p)}{\eta}\|w_{t+1}-w_t\|_M,\label{equ: error bound 1}
\end{align}
where 
\begin{align*}
    c(p)=(\kappa(M)+1)\kappa(M)\frac{\tau^p+\tau^{p-1}}{1-\tau^p},
\end{align*}
and $\tau=\sqrt{1-\kappa^{-2}(M)}<1$. 
\end{lem}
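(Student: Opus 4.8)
The plan is to analyze a single subproblem \eqref{equ: subproblem} in isolation. Write it as $\min_y \Psi(y)$ with $\Psi=h_1+h_2$, $h_1=\psi$, and $h_2(y)=\frac{1}{2\eta}\|y-w_t\|_M^2+\langle\tilde{\nabla}_t,y\rangle$, so that $\nabla h_2(y)=\frac{1}{\eta}M(y-w_t)+\tilde{\nabla}_t$ is affine and $\nabla^2 h_2=\frac{1}{\eta}M$. Since $\lambda_{\min}(M)I\preceq M\preceq\lambda_{\max}(M)I$, $h_2$ is, \emph{in the Euclidean norm}, $\mu$-strongly convex and $L$-smooth with $\mu=\lambda_{\min}(M)/\eta$ and $L=\lambda_{\max}(M)/\eta$; hence $\Psi$ has a unique minimizer $w^*$ (depending on $w_t,\tilde{\nabla}_t$), characterized by $\vz\in\partial\psi(w^*)+\nabla h_2(w^*)$. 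Observe that the prescribed step size is $\gamma=\eta\lambda_{\min}(M)/\lambda_{\max}^2(M)=\mu/L^2\le 1/L$.

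First I would establish the inexact optimality condition \eqref{equ: inexact optimality condition 1}. The optimality condition of the \emph{last} proximal-gradient step, $w_{t+1}=w^p_{t+1}=\prox_{\gamma\psi}\big(w^{p-1}_{t+1}-\gamma\nabla h_2(w^{p-1}_{t+1})\big)$, reads
\[
\vz\in\partial\psi(w_{t+1})+\tfrac{1}{\gamma}\big(w_{t+1}-w^{p-1}_{t+1}\big)+\nabla h_2(w^{p-1}_{t+1}).
\]
Since $\nabla h_2$ is affine, $\nabla h_2(w^{p-1}_{t+1})=\nabla h_2(w_{t+1})+\frac{1}{\eta}M\big(w^{p-1}_{t+1}-w_{t+1}\big)$; substituting this together with $\nabla h_2(w_{t+1})=\frac{1}{\eta}M(w_{t+1}-w_t)+\tilde{\nabla}_t$ yields \eqref{equ: inexact optimality condition 1} with
\[
\varepsilon^p_{t+1}=\Big(\tfrac{1}{\gamma}M^{-1}-\tfrac{1}{\eta}I\Big)\big(w_{t+1}-w^{p-1}_{t+1}\big).
\]
Thus the residual is governed by the length of the final inner step $w_{t+1}-w^{p-1}_{t+1}$.

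Next I would control that step via $Q$-linear convergence of the inner iteration. Because $\gamma\le 1/L$, the standard contraction estimate for proximal gradient on ``$\mu$-strongly convex $L$-smooth plus convex'' yields $\|w^{i+1}_{t+1}-w^*\|^2\le(1-\gamma\mu)\|w^i_{t+1}-w^*\|^2$, and with $w^0_{t+1}=w_t$ this gives $\|w^i_{t+1}-w^*\|\le\tau^i\|w_t-w^*\|$, where $\tau^2=1-\gamma\mu=1-\kappa^{-2}(M)$. Two triangle inequalities then give $\|w_{t+1}-w^{p-1}_{t+1}\|\le(\tau^p+\tau^{p-1})\|w_t-w^*\|$ and, by the reverse triangle inequality, $\|w_{t+1}-w_t\|\ge(1-\tau^p)\|w_t-w^*\|$; combining,
\[
\|w_{t+1}-w^{p-1}_{t+1}\|\le\frac{\tau^p+\tau^{p-1}}{1-\tau^p}\,\|w_{t+1}-w_t\|.
\]
Finally, since $\frac{1}{\gamma}M^{-1}-\frac{1}{\eta}I$ is simultaneously diagonalizable with $M$, the map $v\mapsto M^{1/2}\big(\frac{1}{\gamma}M^{-1}-\frac{1}{\eta}I\big)v$ has Euclidean operator norm $\max_{\lambda\in[\lambda_{\min}(M),\lambda_{\max}(M)]}\big|\frac{1}{\gamma\sqrt\lambda}-\frac{\sqrt\lambda}{\eta}\big|$, which—using $\frac{1}{\gamma}=\lambda_{\max}^2(M)/(\eta\lambda_{\min}(M))$ and monotonicity of the expression in $\lambda$—equals $\big(\lambda_{\max}^2(M)-\lambda_{\min}^2(M)\big)/\big(\eta\lambda_{\min}^{3/2}(M)\big)$. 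Chaining the resulting bound $\|\varepsilon^p_{t+1}\|_M\le\frac{\lambda_{\max}^2(M)-\lambda_{\min}^2(M)}{\eta\lambda_{\min}^{3/2}(M)}\|w_{t+1}-w^{p-1}_{t+1}\|$ with the step bound above and with $\|\cdot\|\le\lambda_{\min}^{-1/2}(M)\|\cdot\|_M$, one gets $\|\varepsilon^p_{t+1}\|_M\le\frac{\kappa^2(M)-1}{\eta}\cdot\frac{\tau^p+\tau^{p-1}}{1-\tau^p}\|w_{t+1}-w_t\|_M$, and $\kappa^2(M)-1=(\kappa(M)-1)(\kappa(M)+1)\le\kappa(M)(\kappa(M)+1)$ produces the stated $c(p)$.

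I expect the crux to be the inequality $\|w_{t+1}-w_t\|\ge(1-\tau^p)\|w_t-w^*\|$: it is precisely because the inner loop is \emph{warm-started at $w_t$} and converges $Q$-linearly that the error—which naturally scales with the length of the last inner step—can be re-expressed in terms of $\|w_{t+1}-w_t\|_M$, the quantity in which the outer convergence analysis (Theorems \ref{thm: convergence of inexact preconditioned svrg}--\ref{thm: convergence of inexact preconditioned Katyusha X}) needs the error measured. Without a linear rate this reverse bound fails and the error-control scheme collapses. The remaining ingredients—the proximal-gradient contraction estimate and the eigenvalue bookkeeping of the $\kappa(M)$ factors—are routine.
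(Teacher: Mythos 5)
Your proposal is correct and follows essentially the same route as the paper's own proof: read off the residual $M\varepsilon^p_{t+1}$ from the optimality condition of the last proximal-gradient step, bound the last inner step via $Q$-linear contraction of the warm-started iterates together with a triangle and reverse-triangle inequality, and then convert everything to $\|\cdot\|_M$. The only cosmetic differences are that you compute $\varepsilon^p_{t+1}=\big(\tfrac{1}{\gamma}M^{-1}-\tfrac{1}{\eta}I\big)(w_{t+1}-w^{p-1}_{t+1})$ in closed form (the paper keeps it as a difference of gradients and bounds it by $\beta$-Lipschitzness of $\nabla h_2$), you invoke a slightly different but, at this particular $\gamma=\mu/L^2$, numerically identical contraction estimate, and your exact eigenvalue computation produces the marginally tighter prefactor $\kappa^2(M)-1$, which you then relax to the paper's stated $(\kappa(M)+1)\kappa(M)$.
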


\begin{proof}[Proof of Lemma \ref{lem: fixed number of GD}]
The optimization problem in \eqref{equ: subproblem} is of the form
\begin{align}
\label{equ: y subproblem}
\mathop{\mathrm{minimize}}_{y\in\Rd} h_1(y)+h_2(y),
\end{align}
for
$    h_1(y)=\psi(y)$ and 
$    h_2(y)=\frac{1}{2\eta}\|y-w_t\|^2_{M}+\langle\tilde{\nabla}, y\rangle.$
With our choice of $S$ as the proximal gradient descent step, the iterations in Procedure \ref{alg: fixed number of inner loops} are
\begin{align}
    w^{0}_{t+1}&=w_{t},\nonumber\\
    w^{i+1}_{t+1} &= \prox_{\gamma h_1}\big(w^{i}_{t+1}- \gamma \nabla h_2(w^i_{t+1})\big),\nonumber\\
    w_{t+1}&=w_{t+1}^p,\nonumber
\end{align}
where $i=0, 1,...,p-1$. From the definition of $\prox_{\gamma h_1}$, we have
\[
\vz\in \partial h_1(w_{t+1}^p)+\nabla h_2(w_{t+1}^{p-1})+\frac{1}{\gamma}(w_{t+1}^{p}-w_{t+1}^{p-1}).
\]
Compare this with \eqref{equ: inexact optimality condition 1} gives
\[
M\varepsilon^p_{t+1}=\frac{1}{\gamma}(w_{t+1}^p-w_{t+1}^{p-1})+\nabla h_2(w_{t+1}^{p-1})-\nabla h_2(w_{t+1}^{p}).
\]
To bound the right hand side, let $w_{t+1}^{\star}$ be the solution of \eqref{equ: y subproblem}, $\alpha=\frac{\lambda_{\text{min}}(M)}{\eta}$, and $\beta=\frac{\lambda_{\text{max}}(M)}{\eta}$. Then $h_1(y)$ is convex and $h_2(y)$ is $\alpha$-strongly convex and $\beta$-Lipschitz differentiable. Consequently, Prop. 26.16(ii) of \cite{bauschke2017convex} gives
\[
\|w_{t+1}^{i}-w_{t+1}^{\star}\|\leq \tau^{i} \|w_{t+1}^0-w_{t+1}^{\star}\|, \quad \forall i= 0, 1, ..., p,
\]
where $\tau=\sqrt{1-\gamma(2\alpha-\gamma \beta^2)}$.

Let $a_i = \|w_{t+1}^i-w_{t+1}^{\star}\|$. Then, $a_i\leq \tau^i a_0$. We can derive
\begin{align*}
\|M\varepsilon^p_{t+1}\|&\leq (\frac{1}{\gamma}+\beta)\|w_{t+1}^p-w_{t+1}^{p-1}\|\nonumber\\
&\leq (\frac{1}{\gamma}+\beta)(a_p+a_{p-1})\leq (\frac{1}{\gamma}+\beta)(\tau^p+\tau^{p-1})a_0.\nonumber
\end{align*}
On the other hand, we have
\begin{align}
\|w_{t+1}-w_t\|&\geq a_0-a_p\geq (1-\tau^p)a_0.\nonumber
\end{align}
Combining these two equations yields
\begin{align}
\|M\varepsilon^p_{t+1}\|\leq b(p)\|w_{t+1}-w_t\|,
\label{equ: proxgrad}
\end{align}
where 
\begin{align}
    b(p)= (\frac{1}{\gamma}+\frac{\lambda_{\mathrm{max}}(M)}{\eta})\frac{\tau^p+\tau^{p-1}}{1-\tau^p}.\label{equ: previous c(p)}
\end{align}
Finally, let the eigenvalues of $M$ be $0<\lambda_1\leq \lambda_2\leq...\leq \lambda_d$, with orthonormal eigenvectors $v_1, v_2,..., v_d$. Let $\varepsilon^p_{t+1}$ and $w_{t+1}-w_t$ be decomposed by
\begin{align*}
    \varepsilon^p_{t+1} &=\sum_{i=1}^d \alpha_i v_i,\\
    w_{t+1}-w_t &= \sum_{i=1}^d \beta_i v_i.
\end{align*}
then 
\begin{align*}
\|\varepsilon^p_{t+1}\|_M&=\sqrt{\sum_{i=1}^d \lambda_i\alpha_i^2}\leq \sqrt{\frac{1}{\lambda_{\mathrm{min}}(M)}\sum_{i=1}^d \lambda_i^2\alpha_i^2}\\
&=\sqrt{\frac{1}{\lambda_{\mathrm{min}}(M)}}\|M\varepsilon^p_{t+1}\|,\\
\|w_{t+1}-w_t\|&=\sqrt{\sum_{i=1}^d \beta_i^2}\leq \sqrt{\frac{1}{\lambda_{\mathrm{min}}(M)}\sum_{i=1}^d \lambda_i\beta_i^2}\\
&=\sqrt{\frac{1}{\lambda_{\mathrm{min}}(M)}}\|w_{t+1}-w_t\|_M.
\end{align*}
Combine these two inequalities with \eqref{equ: proxgrad}, we arrive at 
\begin{align}
\label{equ: proxgrad error estimate}
\|\varepsilon^p_{t+1}\|_M\leq c(p)\|w_{t+1}-w_t\|_M,
\end{align}
where 
\[
c(p)=\frac{1}{\lambda_{\mathrm{min}}(M)}b(p)=\frac{\frac{1}{\gamma}+\frac{\lambda_{\mathrm{max}}(M)}{\eta}}{\lambda_{\mathrm{min}}(M)}\frac{\tau^p+\tau^{p-1}}{1-\tau^p}.
\]
\end{proof}


Now, we are ready to prove Lemma \ref{lem: finite loops of FISTA with restart}, the techniques are similar to the proof of Lemma \ref{lem: fixed number of GD}.

\begin{proof}[Proof of Lemma \ref{lem: finite loops of FISTA with restart}]
We want to find $c(p)$ such that
\begin{align}
\vz \in &\partial \psi(w_{t+1})+\frac{1}{\eta}M(w_{t+1}-w_t)+\tilde{\nabla}_t+M\varepsilon^p_{t+1},\label{equ: inexact}\\
\|\varepsilon^p_{t+1}\|_M&\leq \frac{c(p)}{\eta}\|w_{t+1}-w_t\|_M,\label{equ: lem 4 0}
\end{align}
Take $i=r-1$ and $j=p_0-1$, then the optimality condition of the problem in line 5 of Algorithm \ref{alg: fixed number of FISTA with restart} is
\[
\vz\in \partial \psi(w_{t+1}^{(r-1, p_0)})+\frac{1}{\gamma}(w_{t+1}^{(r-1, p_0)}-u_{t+1}^{(r-1, p_0)})+\nabla h_2(u_{t+1}^{(r-1, p_0)}),
\]
compare this with \eqref{equ: inexact}, we have
\begin{align*}
M\varepsilon^p_{t+1}=&\frac{1}{\gamma}(w_{t+1}^{(r-1, p_0)}-u_{t+1}^{(r-1, p_0)})+\nabla h_2(u_{t+1}^{(r-1, p_0)})\\
& -\frac{1}{\eta}M(w_{t+1}-w_t)-\tilde{\nabla}_t\\
=& \frac{1}{\gamma}(w_{t+1}^{(r-1, p_0)}-u_{t+1}^{(r-1, p_0)})\\
&+\frac{1}{\eta}M(u^{(r-1, p_0)}_{t+1}-w_{t+1})
\end{align*}
where
\begin{align*}
u_{t+1}^{(r-1, p_0)}=&w_{t+1}^{(r-1, p_0-1)}+\frac{\theta_{p_0-2}-1}{\theta_{p_0-1}}(w_{t+1}^{(r-1, p_0-1)}-w_{t+1}^{(r-1, p_0-2)}).
\end{align*}
As a result, 
\begin{align}
    \|M\varepsilon^p_{t+1}\|\leq & \|\frac{1}{\gamma}(w_{t+1}^{(r-1, p_0)}-u_{t+1}^{(r-1, p_0)})\|\\
    &+\|\frac{1}{\eta}M(u^{(r-1, p_0)}_{t+1}-w_{t+1})\|\nonumber\\
\leq &\|\frac{1}{\gamma}(w_{t+1}^{(r-1, p_0)}-w_{t+1}^{(r-1, p_0-1)}\|\nonumber\\
&+\frac{1}{\gamma}\|\frac{\theta_{p_0-2}-1}{\theta_{p_0-1}}(w_{t+1}^{(r-1, p_0-1)}-w_{t+1}^{(r-1, p_0-2)})\|\nonumber\\
&+\|\frac{1}{\eta}M(w_{t+1}^{(r-1, p_0-1)}-w_{t+1})\|\nonumber\\
&+\|\frac{1}{\eta}\frac{\theta_{p_0-2}-1}{\theta_{p_0-1}}M(w_{t+1}^{(r-1, p_0-1)}-w_{t+1}^{(r-1, p_0-2)})\|,\label{equ: 1} 
\end{align}
Let the solution of \eqref{equ: subproblem} be $w^{\star}_{t+1}$. By Theorem 4.4 of \cite{beck2009fast}, for any $0\leq i \leq r-1$ and $0\leq j\leq p_0$ we have
\[
\Psi(w^{(i, j)}_{t+1})-\Psi(w^{\star}_{t+1})\leq \frac{2{\lambda_{\mathrm{max}}(M)}\|w^{(i,0)}_{t+1}-w^{\star}_{t+1}\|^2}{\eta j^2}.
\]
On the other hand, the strong convexity of $\Psi=h_1+h_2$ gives
\[
\Psi(w^{(i, j)}_{t+1})-\Psi(w^{\star}_{t+1})\geq \frac{{\lambda_{\mathrm{min}}(M)}}{2\eta}\|w^{(i, j)}_{t+1}-w^{\star}_{t+1}\|^2.
\]
Therefore, 
\begin{align}
\label{equ: 2}
\|w^{(i, j)}_{t+1}-w^{\star}_{t+1}\|\leq \sqrt{\frac{4\kappa(M)}{j^2}}\|w^{(i,0)}_{t+1}-w^{\star}_{t+1}\|.
\end{align}
Now, let us use \eqref{equ: 2} repeatedly to bound the right hand side of \eqref{equ: 1}. For example, the first term can be bounded as
\begin{align*}
&\|\frac{1}{\gamma}(w_{t+1}^{(r-1, p_0)}- w_{t+1}^{(r-1, p_0-1)}\|\\
\leq& \frac{1}{\gamma}\|w_{t+1}^{(r-1, p_0)}-w_{t+1}^{\star}\|\\
&+\frac{1}{\gamma}\|w_{t+1}^{(r-1, p_0-1)}-w_{t+1}^{\star}\|\nonumber\\
\leq& \frac{1}{\gamma}(\frac{4\kappa(M)}{p_0^2})^{\frac{r}{2}}\|w_{t+1}^{(0,0)}-w^{\star}_{t+1}\|\\
&+\frac{1}{\gamma}(\frac{4\kappa(M)}{p_0^2})^{\frac{r-1}{2}}(\frac{4\kappa(M)}{(p_0-1)^2})^{\frac{1}{2}}\|w_{t+1}^{(0,0)}-w^{\star}_{t+1}\|.
\end{align*}
Similarly, the rest of the terms can be bounded as follows,
\begin{align*}
&\frac{1}{\gamma}\|\frac{\theta_{p_0-2}-1}{\theta_{p_0-1}} (w_{t+1}^{(r-1, p_0-1)}-w_{t+1}^{(r-1, p_0-2)})\|\nonumber\\
\leq& \frac{1}{\gamma}(\frac{4\kappa(M)}{p_0^2})^{\frac{r-1}{2}}(\frac{4\kappa(M)}{(p_0-1)^2})^{\frac{1}{2}}\|w_{t+1}^{(0,0)}-w^{\star}_{t+1}\|\\
&+\frac{1}{\gamma}(\frac{4\kappa(M)}{p_0^2})^{\frac{r-1}{2}}(\frac{4\kappa(M)}{(p_0-2)^2})^{\frac{1}{2}}\|w_{t+1}^{(0,0)}-w^{\star}_{t+1}\|,
\end{align*}
\begin{align*}
&\|\frac{1}{\eta}M(w_{t+1}^{(r-1, p_0-1)}-w_{t+1})\|\\
\leq&\frac{\lambda_{\mathrm{max}}(M)}{\eta}(\frac{4\kappa(M)}{p_0^2})^{\frac{r-1}{2}}(\frac{4\kappa(M)}{(p_0-1)^2})^{\frac{1}{2}}\|w_{t+1}^{(0,0)}-w^{\star}_{t+1}\|,\nonumber\\
&+\frac{\lambda_{\mathrm{max}}(M)}{\eta}(\frac{4\kappa(M)}{p_0^2})^{\frac{r}{2}}\|w_{t+1}^{(0,0)}-w^{\star}_{t+1}\|,\nonumber
\end{align*}
\begin{align*}
&\|\frac{1}{\eta}\frac{\theta_{p_0-2}-1}{\theta_{p_0-1}}M(w_{t+1}^{(r-1, p_0-1)}-w_{t+1}^{(r-1, p_0-2)})\|\nonumber\\
\leq& \frac{\lambda_{\mathrm{max}}(M)}{\eta}(\frac{4\kappa(M)}{p_0^2})^{\frac{r-1}{2}}(\frac{4\kappa(M)}{(p_0-1)^2})^{\frac{1}{2}}\|w_{t+1}^{(0,0)}-w^{\star}_{t+1}\|\\
&+\frac{\lambda_{\mathrm{max}}(M)}{\eta}(\frac{4\kappa(M)}{p_0^2})^{\frac{r-1}{2}}(\frac{4\kappa(M)}{(p_0-2)^2})^{\frac{1}{2}}\|w_{t+1}^{(0,0)}-w^{\star}_{t+1}\|,
\end{align*}
where in the first and third estimate we have used $\frac{\theta_{p_0-2}-1}{\theta_{p_0-1}}\leq \frac{\theta_{p_0-2}}{\theta_{p_0-1}}<1$. On the other hand, we have
\begin{align*}
&\|w_{t+1}-w_t\|=\|w^{(r-1,p_0)}_{t+1}-w^{(0,0)}_{t+1}\|
\\&\geq \|w^{(0,0)}_{t+1}-w^{\star}_{t+1}\|-\|w^{(r-1,p_0)}_{t+1}-w^{\star}_{t+1}\|\\
&\geq (1-(\frac{4\kappa(M)}{p_0^2})^{\frac{r}{2}})\|w^{(0,0)}_{t+1}-w^{\star}_{t+1}\|.
\end{align*}
As a result, taking $\gamma=\frac{\lambda_{\mathrm{max}}(M)}{\eta}$, $w^{(0,0)}_{t+1}=w_{t}$, $w^{(r-1, p_0)}_{t+1}=w_{t+1}$ and $\tau=(\frac{4\kappa(M)}{p_0^2})^{\frac{1}{2 p_0}}$ yields
\[
\|M\varepsilon^p_{t+1}\|\leq 2\frac{\lambda_{\mathrm{max}}(M)}{\eta}\frac{b(p)}{1-\tau^p}\|w_{t+1}-w_{t}\|,
\]
where
\begin{align}
b(p)=&\tau^{p-p_0}\big((\frac{4\kappa(M)}{(p_0-1)^2})^{\frac{1}{2}}+(\frac{4\kappa(M)}{(p_0-2)^2})^{\frac{1}{2}}\big)\nonumber\\
&+\tau^p+\tau^{p-p_0}(\frac{4\kappa(M)}{(p_0-1)^2})^{\frac{1}{2}}.\label{equ: lem 4 11}
\end{align}
Similar to the end of proof of Lemma \ref{lem: fixed number of GD}, we have
\[
\|M\varepsilon^p_{t+1}\|_M\leq 2\frac{\kappa(M)}{\eta}\frac{b(p)}{1-\tau^p}\|w_{t+1}-w_{t}\|_M.
\]
Now, let us choose $p_0$ such that $\tau=(\frac{4\kappa(M)}{p_0^2})^{\frac{1}{2 p_0}}$ is minimized, a simple calculation yields
\[
p^{\star}_0=2e\sqrt{\kappa(M)}.
\]
In order for $p_0$ to be an integer, we can take
\[
p_0=\lceil 2e\sqrt{\kappa(M)}\rceil,
\]
then 
\begin{align*}
\tau&=(\frac{4\kappa(M)}{p_0^2})^{\frac{1}{2 p_0}}\leq (\frac{1}{e^2})^{\frac{1}{2 \lceil 2e\sqrt{\kappa(M)}\rceil}}\leq (\frac{1}{e^2})^{\frac{1}{2 ( 2e\sqrt{\kappa(M)}+1)}}\\
&=\exp(-\frac{1}{2e\sqrt{\kappa(M)}+1}).
\end{align*}
Finally, Let us show that $b(p)$ in \eqref{equ: lem 4 11} can be bounded by $7\tau^p$, and the desired bound \eqref{equ: lem 4 0} on $\|\varepsilon^p_{t+1}\|_M$ follows.

First, we have
\begin{align*}
\tau^{-p_0}(\frac{4\kappa(M)}{p_0-1})^{\frac{1}{2}}=(\frac{p_0}{p_0-1})^{\frac{1}{p_0}},
\end{align*}
and 
\[
p_0=\lceil 2e\sqrt{\kappa(M)}\rceil\geq \lceil2e\rceil=6.
\]
On the other hand, a simple calculation shows that $(\frac{p_0}{p_0-1})^{\frac{1}{p_0}}$ is decreasing in $p_0$, therefore 
\[
\tau^{-p_0}(\frac{4\kappa(M)}{p_0-1})^{\frac{1}{2}}\leq (\frac{6}{5})^{\frac{1}{6}}<2,
\]
Similarly, one can show that
\[
\tau^{-p_0}(\frac{4\kappa(M)}{p_0-2})^{\frac{1}{2}}\leq (\frac{6}{4})^{\frac{1}{6}}<2.
\]
Combining these two inequalities with \eqref{equ: lem 4 1} yields $$b(p)\leq 7\tau^p.$$
\end{proof}


\section{Proof of Theorem \ref{thm: convergence of inexact preconditioned svrg}}
\label{app: convergence of inexact preconditioned svrg}
In this section, we proceed to establish the convergence of inexact preconditioned SVRG as in Algorithm \ref{alg: inexact preconditioned svrg}. The proof is similar to that of Theorem D.1 of \cite{allen2018katyusha}.

Before proving Theorem \ref{thm: convergence of inexact preconditioned svrg}, let us first prove several lemmas.

First, the inexact optimality condition \eqref{equ: inexact optimality condition} gives the following descent:
\begin{lem}
\label{lem: inexact descent}
Under Assumption \ref{Assumption 1}, suppose that \eqref{equ: inexact optimality condition} holds. Then, for any $u\in\Rd$ we have 
\begin{align*}
    &\langle\tilde{\nabla}_t, w_t-u \rangle+\psi(w_{t+1})-\psi(u)\\
    &\leq \langle\tilde{\nabla}_t, w_t-w_{t+1} \rangle +\frac{\|u-w_t\|_M^2}{2\eta}\\
    &-\frac{1}{2\eta}\|u-w_{t+1}\|_M^2-\frac{1}{2\eta}\|w_{t+1}-w_{t}\|_M^2\\
    &+\langle M\varepsilon^p_{t+1}, u-w_{t+1}\rangle.
\end{align*}
\end{lem}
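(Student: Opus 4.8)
The plan is to extract an explicit subgradient of $\psi$ from the inexact optimality condition \eqref{equ: inexact optimality condition}, apply the subgradient inequality (which is available since $\psi$ is convex by Assumption \ref{Assumption 1}), and then rearrange the resulting inner products using the $M$-weighted three-point identity.

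First I would observe that \eqref{equ: inexact optimality condition} is precisely the statement that there exists $g\in\partial\psi(w_{t+1})$ with
$g=-\tfrac{1}{\eta}M(w_{t+1}-w_t)-\tilde{\nabla}_t-M\varepsilon^p_{t+1}$.
Convexity of $\psi$ gives, for every $u\in\Rd$, $\psi(u)\ge\psi(w_{t+1})+\langle g,u-w_{t+1}\rangle$, i.e.\ $\psi(w_{t+1})-\psi(u)\le\langle g,w_{t+1}-u\rangle$. Plugging in the formula for $g$ and adding $\langle\tilde{\nabla}_t,w_t-u\rangle$ to both sides, the two $\tilde{\nabla}_t$-terms telescope into $\langle\tilde{\nabla}_t,w_t-w_{t+1}\rangle$, the error term becomes $\langle M\varepsilon^p_{t+1},u-w_{t+1}\rangle$, and what is left over is the single quantity $-\tfrac{1}{\eta}\langle M(w_{t+1}-w_t),w_{t+1}-u\rangle$.

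The only genuine computation is the $M$-inner-product three-point identity
\[
\langle M(w_{t+1}-w_t),\,w_{t+1}-u\rangle=\tfrac12\|w_{t+1}-w_t\|_M^2+\tfrac12\|w_{t+1}-u\|_M^2-\tfrac12\|w_t-u\|_M^2,
\]
which follows by expanding the three squared $M$-norms (equivalently, polarization applied to the bilinear form $(x,y)\mapsto\langle Mx,y\rangle$, using $M=M^{\mathsf T}$). Multiplying by $-\tfrac{1}{\eta}$ yields exactly the terms $\tfrac{1}{2\eta}\|u-w_t\|_M^2-\tfrac{1}{2\eta}\|u-w_{t+1}\|_M^2-\tfrac{1}{2\eta}\|w_{t+1}-w_t\|_M^2$ appearing in the claimed inequality, which completes the proof.

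I do not expect any real obstacle here: the lemma is essentially the standard "prox-inequality" descent estimate carried through in the $\|\cdot\|_M$ geometry, with the inexactness term $M\varepsilon^p_{t+1}$ simply carried along as an extra additive perturbation. The only points requiring care are the sign bookkeeping when substituting the expression for $g$, and the symmetry of $M$ needed for the three-point identity. Note in particular that the smoothness and strong-convexity parts of Assumption \ref{Assumption 1} are not used in this lemma — only convexity of $\psi$ and $M\succ 0$.
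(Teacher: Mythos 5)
Your proof is correct and follows essentially the same route as the paper's: both extract the subgradient of $\psi$ at $w_{t+1}$ from the inexact optimality condition \eqref{equ: inexact optimality condition}, apply the subgradient inequality, and convert the cross term $-\tfrac{1}{\eta}\langle M(w_{t+1}-w_t),w_{t+1}-u\rangle$ via the $M$-weighted three-point identity. Your closing observation that only convexity of $\psi$ and symmetry/positive-definiteness of $M$ are needed is also accurate.
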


\begin{proof}
First, let us rewrite the left hand side as
\begin{align*}
&\langle\tilde{\nabla}_t, w_t-u \rangle+\psi(w_{t+1})-\psi(u)\\
&=\langle\tilde{\nabla}_t, w_t-w_{t+1} \rangle+\langle\tilde{\nabla}_t, w_{t+1}-u \rangle+\psi(w_{t+1})-\psi(u).
\end{align*}
By \eqref{equ: inexact optimality condition} and the definition of subdifferential we have
\[
\psi(u)\geq \psi(w_{t+1})-\langle \tilde{\nabla}_t+\frac{1}{\eta}M(w_{t+1}-w_{t})+M\varepsilon^p_{t+1}, u-w_{t+1}\rangle.
\]
Combining these two gives
\begin{align*}
    &\langle\tilde{\nabla}_t, w_t-u \rangle+\psi(w_{t+1})-\psi(u)\\
    \leq& \langle\tilde{\nabla}_t, w_t-w_{t+1} \rangle\\
    &+\langle \frac{1}{\eta}M(w_{t+1}-w_t)+M\varepsilon^p_{t+1}, u-w_{t+1}\rangle\\
    =& \langle\tilde{\nabla}_t, w_t-w_{t+1} \rangle +\frac{\|u-w_t\|_M^2}{2\eta}\\
    &-\frac{1}{2\eta}\|u-w_{t+1}\|_M^2-\frac{1}{2\eta}\|w_{t+1}-w_{t}\|_M^2\\
    &+\langle M\varepsilon^p_{t+1}, u-w_{t+1}\rangle,
\end{align*}
where in the last equality we have applied
\[
\langle a-b, c-a \rangle_{M}=-\frac{1}{2}\|a-b\|^2_{M}-\frac{1}{2}\|a-c\|^2_{M}+\frac{1}{2}\|b-c\|_{M}.
\]
\end{proof}

Based on lemma \ref{lem: inexact descent}, we have
\begin{lem}
\label{lem: lem 1 for thm 1}
Under Assumption \ref{Assumption 1}, if the iterator $S$ in Procedure \ref{alg: fixed number of inner loops} is proximal gradient descent or FISTA with restart, then, for any $a>0$, $\eta\leq\frac{1-2c(p)a}{2L^M_f}$, and $u\in\Rd$ we have
\begin{align*}
&\mathbb{E}[F(w_{t+1})-F(u)]\\
\leq&  \mathbb{E}[ \eta\|\tilde{\nabla}_t-\nabla f(w_t)\|^2_{M^{-1}}+\frac{1-\eta\sigma^M_f}{2\eta}\|u-w_t\|_M^2\\
&-(\frac{1}{2\eta}-\frac{c(p)}{2\eta a})\|u-w_{t+1}\|_M^2].
\end{align*}
\end{lem}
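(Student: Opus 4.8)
The plan is to combine the inexact descent inequality of Lemma~\ref{lem: inexact descent} with the $L^M_f$-smoothness and $\sigma^M_f$-strong convexity of $f$ under $\|\cdot\|_M$, and then to absorb the two residual inner products by Young's inequality with suitably chosen weights. First I would invoke Lemma~\ref{lem: inexact descent} with the given $u$ to control $\langle\tilde{\nabla}_t, w_t-u\rangle+\psi(w_{t+1})-\psi(u)$. Since each $f_i$ is $L^M_f$-smooth under $\|\cdot\|_M$, so is the average $f$, hence $f(w_{t+1})\le f(w_t)+\langle\nabla f(w_t), w_{t+1}-w_t\rangle+\frac{L^M_f}{2}\|w_{t+1}-w_t\|_M^2$; and by strong convexity $f(u)\ge f(w_t)+\langle\nabla f(w_t), u-w_t\rangle+\frac{\sigma^M_f}{2}\|u-w_t\|_M^2$. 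Adding these two to the Lemma~\ref{lem: inexact descent} bound and cancelling $f(w_t)$, all terms in $\nabla f(w_t)$ and $\tilde{\nabla}_t$ collapse into $\langle\nabla f(w_t)-\tilde{\nabla}_t,\, w_{t+1}-u\rangle$, leaving
\begin{align*}
F(w_{t+1})-F(u)\le{}& \langle\nabla f(w_t)-\tilde{\nabla}_t,\, w_{t+1}-u\rangle+\frac{1-\eta\sigma^M_f}{2\eta}\|u-w_t\|_M^2\\
&-\frac{1}{2\eta}\|u-w_{t+1}\|_M^2-\Big(\frac{1}{2\eta}-\frac{L^M_f}{2}\Big)\|w_{t+1}-w_t\|_M^2+\langle M\varepsilon^p_{t+1},\, u-w_{t+1}\rangle.
\end{align*}

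Next I would handle the two error terms. Writing $w_{t+1}-u=(w_{t+1}-w_t)+(w_t-u)$, the piece $\langle\nabla f(w_t)-\tilde{\nabla}_t, w_t-u\rangle$ has zero conditional expectation because $\tilde{\nabla}_t$ is an unbiased estimator of $\nabla f(w_t)$ (indeed $g=\nabla f(w_0)$ and $\mathbb{E}[\nabla f_{i_t}(w_t)-\nabla f_{i_t}(w_0)\mid\mathcal{F}_t]=\nabla f(w_t)-\nabla f(w_0)$), whereas $\langle\nabla f(w_t)-\tilde{\nabla}_t, w_{t+1}-w_t\rangle\le\|\nabla f(w_t)-\tilde{\nabla}_t\|_{M^{-1}}\|w_{t+1}-w_t\|_M\le\eta\|\tilde{\nabla}_t-\nabla f(w_t)\|_{M^{-1}}^2+\frac{1}{4\eta}\|w_{t+1}-w_t\|_M^2$ by Cauchy--Schwarz in the $M$-pairing followed by Young's inequality with parameter $2\eta$. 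For the last term, $\langle M\varepsilon^p_{t+1}, u-w_{t+1}\rangle=\langle\varepsilon^p_{t+1}, u-w_{t+1}\rangle_M\le\|\varepsilon^p_{t+1}\|_M\|u-w_{t+1}\|_M$; inserting the bound $\|\varepsilon^p_{t+1}\|_M\le\frac{c(p)}{\eta}\|w_{t+1}-w_t\|_M$ from Lemma~\ref{lem: finite loops of FISTA with restart} (or Lemma~\ref{lem: fixed number of GD} when $S$ is proximal gradient descent) and applying Young's inequality with the free parameter $a>0$ gives $\langle M\varepsilon^p_{t+1}, u-w_{t+1}\rangle\le\frac{c(p)a}{2\eta}\|w_{t+1}-w_t\|_M^2+\frac{c(p)}{2\eta a}\|u-w_{t+1}\|_M^2$.

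Finally I would collect coefficients. The $\|w_{t+1}-w_t\|_M^2$ terms total $\big(-\frac{1}{2\eta}+\frac{L^M_f}{2}+\frac{1}{4\eta}+\frac{c(p)a}{2\eta}\big)\|w_{t+1}-w_t\|_M^2=\big(\frac{L^M_f}{2}-\frac{1-2c(p)a}{4\eta}\big)\|w_{t+1}-w_t\|_M^2$, which is nonpositive precisely under the hypothesis $\eta\le\frac{1-2c(p)a}{2L^M_f}$, so this term can be dropped; the $\|u-w_{t+1}\|_M^2$ terms combine to $-\big(\frac{1}{2\eta}-\frac{c(p)}{2\eta a}\big)\|u-w_{t+1}\|_M^2$. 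Taking total expectation and using $\mathbb{E}[\langle\nabla f(w_t)-\tilde{\nabla}_t, w_t-u\rangle]=0$ yields the claim. The only delicate point is the bookkeeping of the several quadratic terms and the choice of the Young's-inequality weights ($2\eta$ for the variance term, $a$ for the $\varepsilon$ term) so that the surviving constants reproduce exactly the statement; beyond Lemma~\ref{lem: inexact descent} and the already-established error bound, no hard estimate is involved.
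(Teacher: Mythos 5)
Your proof is correct and follows essentially the same route as the paper: Lemma~\ref{lem: inexact descent}, combined with $L^M_f$-smoothness and $\sigma^M_f$-strong convexity of $f$ under $\|\cdot\|_M$, the unbiasedness $\mathbb{E}[\tilde{\nabla}_t\mid\mathcal{F}_t]=\nabla f(w_t)$, Cauchy--Schwarz in the $M/M^{-1}$ pairing, and Young's inequality with free parameter $a$ on the $\varepsilon^p_{t+1}$ term. The only cosmetic difference is the Young weight on the stochastic-error term — you choose it so that the $\|w_{t+1}-w_t\|_M^2$ coefficient becomes nonpositive and can be dropped, whereas the paper chooses it to cancel that term exactly and then bounds $\frac{\eta}{2(1-c(p)a-\eta L^M_f)}\leq\eta$; both require precisely the same step-size hypothesis $\eta\leq\frac{1-2c(p)a}{2L^M_f}$.
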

\begin{proof}
We have
\begin{align}
&\mathbb { E } [ F \left( w _ { t + 1 } \right) - F ( u ) ]\nonumber\\
&= \mathbb { E } [ f \left( w _ { t + 1 } \right) - f ( u ) + \psi \left( w _ { t + 1 } \right) - \psi ( u ) ]\nonumber\\
\leq&\mathbb { E } [ f \left( w _ { t } \right) + \left\langle \nabla f \left( w _ { t } \right) , w _ { t + 1 } - w _ { t } \right\rangle  \nonumber\\
&+ \frac { L^M_f } { 2 } \left\| w _ { t } - w _ { t + 1 } \right\|_M ^ { 2 }- f ( u ) + \psi \left( w _ { t + 1 } \right) - \psi ( u ) ]  \nonumber\\  
&\mathbb { E } [ \left\langle \nabla f \left( w _ { t } \right) , w _ { t } - u \right\rangle - \frac { \sigma _ { f }^M } { 2 } \left\| u - w _ { t } \right\|_M ^ { 2 } \nonumber\\ 
&+\left\langle \nabla f \left( w _ { t } \right) , w _ { t + 1 } - w _ { t } \right\rangle + \frac { L^M_f } { 2 } \left\| w _ { t } - w _ { t + 1 } \right\|_M ^ { 2 }\nonumber\\
&+ \psi \left( w _ { t + 1 } \right) - \psi ( u ) ]  \nonumber\\
=&\mathbb { E } [ \langle \tilde { \nabla } _ { t } , w _ { t } - u \rangle - \frac { \sigma^M _ { f } } { 2 } \left\| u - w _ { t } \right\|_M ^ { 2 }\\
&+ \left\langle \nabla f \left( w _ { t } \right) , w _ { t + 1 } - w _ { t } \right\rangle\nonumber\\
&+ \frac { L^M_f } { 2 } \left\| w _ { t } - w _ { t + 1 } \right\|_M ^ { 2 } + \psi \left( w _ { t + 1 } \right) - \psi ( u ) ],  \label{equ: lem 4 1}
\end{align}
where the first and second inequality are due to the strong convexity and smoothness under $\|\cdot\|_M$ in Assumption \ref{Assumption 1}, respectively. the last equality is due to $\E[\tilde{\nabla}_t]=\nabla f(w_t)$.

On the other hand, recall that Lemma \ref{lem: inexact descent} gives
\begin{align*}
    &\langle\tilde{\nabla}_t, w_t-u \rangle+\psi(w_{t+1})-\psi(u)\\
    & \langle\tilde{\nabla}_t, w_t-w_{t+1} \rangle +\frac{\|u-w_t\|_M^2}{2\eta}\\
    &-\frac{1}{2\eta}\|u-w_{t+1}\|_M^2-\frac{1}{2\eta}\|w_{t+1}-w_{t}\|_M^2\\
    &+\langle M\varepsilon^p_{t+1}, u-w_{t+1}\rangle,
\end{align*}
For the last term we can apply Cauchy-Schwartz as follows,
\[
\langle M\varepsilon^p_{t+1}, u-w_{t+1}\rangle\leq \|\varepsilon^p_{t+1}\|_M\|u-w_{t+1}\|_M,
\]
from Lemma \ref{lem: fixed number of GD} and Lemma \ref{lem: finite loops of FISTA with restart} we know that
\[
\|\varepsilon^p_{t+1}\|_M\leq \frac{c(p)}{\eta}\|w_{t+1}-w_t\|_M.
\]
Therefore, by Young's inequality, we have for any $a>0$ that
\begin{align*}
&\langle M\varepsilon^p_{t+1}, u-w_{t+1}\rangle\\
\leq&\frac{c(p)a}{2\eta}\|w_{t+1}-w_t\|^2_M+\frac{c(p)}{2a\eta}\|u-w_{t+1}\|^2_M.
\end{align*}
Applying this to Lemma \ref{lem: inexact descent} yields
\begin{align*}
    &\langle\tilde{\nabla}_t, w_t-u \rangle+\psi(w_{t+1})-\psi(u)\\
    \leq&\langle\tilde{\nabla}_t, w_t-w_{t+1} \rangle +\frac{\|u-w_t\|_M^2}{2\eta}\\
    &-\frac{1}{2\eta}\|u-w_{t+1}\|_M^2-\frac{1}{2\eta}\|w_{t+1}-w_{t}\|_M^2\\
    &+\langle M\varepsilon^p_{t+1}, u-w_{t+1}\rangle\\
    & \langle\tilde{\nabla}_t, w_t-w_{t+1} \rangle +\frac{\|u-w_t\|_M^2}{2\eta}\\
    &-(\frac{1}{2\eta}-\frac{c(p)}{2a\eta})\|u-w_{t+1}\|_M^2\\
    &-(\frac{1}{2\eta}-\frac{c(p)a}{2\eta})\|w_{t+1}-w_{t}\|_M^2
\end{align*}
Applying this to \eqref{equ: lem 4 1}, we arrive at
\begin{align*}
& \mathbb { E } [ F \left( w _ { t + 1 } \right) - F ( u ) ]  \\ 
\leq& \mathbb { E } [ \langle\tilde { \nabla } _ { t } - \nabla f \left( w _ { t } \right) , w _ { t } - w _ { t + 1 } \rangle \\
&- \frac { 1 -c(p)a-\eta L^M_f } { 2 \eta } \left\| w _ { t } - w _ { t + 1 } \right\|_M ^ { 2 }\\
& +  \frac{1 - \eta\sigma^M _ { f }}{2\eta} \left\| u - w _ { t } \right\|_M ^ { 2 }-(\frac{1}{2\eta}-\frac{c(p)}{2a\eta})\|u-w_{t+1}\|_M^2 ]\\
& \mathbb { E } [ \frac{\eta}{2(1-c(p)a-\eta L^M_f)}\|\tilde{\nabla}_t-\nabla f(w_t)\|_{M^{-1}}^2\\
& +  \frac{1 - \eta\sigma^M _ { f }}{2\eta} \left\| u - w _ { t } \right\|_M ^ { 2 }-(\frac{1}{2\eta}-\frac{c(p)}{2a\eta})\|u-w_{t+1}\|_M^2 ],
\end{align*}
where in the second inequality we have applied 
\begin{align*}
&\langle u_1, u_2 \rangle=\langle M^{-\frac{1}{2}}u_1, M^{\frac{1}{2}}u_2\rangle\leq\|u_1\|_{M^{-1}}\|u_2\|_{M}\\
& \leq\frac{1}{2b}\|u_1\|^2_{M_1^{-1}}+\frac{b}{2}\|u_2\|^2_{M^{\frac{1}{2}}}\quad\text{for any $b>0$}.
\end{align*}

Finally, since $\eta\leq\frac{1-2c(p)a}{2L^M_f}$, we have
$\frac{\eta}{2(1-c(p)a-\eta L^M_f)} \leq \eta$, which gives the desired result.

\end{proof}
\begin{lem}
\label{lem: lem 2 for thm 1}
Under Assumption \ref{Assumption 1}, we have
\[
\E[\|\tilde{\nabla}_t-\nabla f (w_t)\|_{M^{-1}}^2]\leq  (L^M_f)^2\|w_0-w_t\|^2_M.
\]
\end{lem}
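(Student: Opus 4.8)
The plan is to bound the variance-type quantity $\E[\|\tilde{\nabla}_t-\nabla f(w_t)\|_{M^{-1}}^2]$ by exploiting the definition of the variance-reduced stochastic gradient $\tilde{\nabla}_t = g + \nabla f_{i_t}(w_t) - \nabla f_{i_t}(w_0)$ where $g = \nabla f(w_0)$. First I would rewrite the deviation from the full gradient: since $\E_{i_t}[\nabla f_{i_t}(w_t) - \nabla f_{i_t}(w_0)] = \nabla f(w_t) - \nabla f(w_0)$, we get
\[
\tilde{\nabla}_t - \nabla f(w_t) = \big(\nabla f_{i_t}(w_t) - \nabla f_{i_t}(w_0)\big) - \E_{i_t}\big[\nabla f_{i_t}(w_t) - \nabla f_{i_t}(w_0)\big].
\]
This exhibits $\tilde{\nabla}_t - \nabla f(w_t)$ as a centered random variable, so its second moment (in the $\|\cdot\|_{M^{-1}}$ norm) is at most the uncentered second moment.

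Next I would use the standard fact that for a random vector $X$, $\E\|X - \E X\|^2 \le \E\|X\|^2$ — this holds verbatim for the norm $\|\cdot\|_{M^{-1}}$ since $M^{-1}\succ 0$ induces a genuine inner-product norm and the variance decomposition $\E\|X-\E X\|_{M^{-1}}^2 = \E\|X\|_{M^{-1}}^2 - \|\E X\|_{M^{-1}}^2$ is valid. Applying this with $X = \nabla f_{i_t}(w_t) - \nabla f_{i_t}(w_0)$ gives
\[
\E[\|\tilde{\nabla}_t - \nabla f(w_t)\|_{M^{-1}}^2] \le \E_{i_t}[\|\nabla f_{i_t}(w_t) - \nabla f_{i_t}(w_0)\|_{M^{-1}}^2].
\]

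Then I would invoke $L_f^M$-smoothness of each $f_i$ under $\|\cdot\|_M$. As noted right after Definition~\ref{def: smoothness} in the excerpt, this is equivalent to the co-coercivity-flavored bound $\|\nabla f_i(x) - \nabla f_i(y)\|_{M^{-1}} \le L_f^M \|x-y\|_M$ for all $x,y$. Applying this with $x = w_t$, $y = w_0$ to each term inside the expectation yields $\|\nabla f_{i_t}(w_t) - \nabla f_{i_t}(w_0)\|_{M^{-1}}^2 \le (L_f^M)^2 \|w_t - w_0\|_M^2$ for every realization of $i_t$, and since the right side no longer depends on $i_t$, taking the expectation preserves it. Combining the three displays gives exactly $\E[\|\tilde{\nabla}_t - \nabla f(w_t)\|_{M^{-1}}^2] \le (L_f^M)^2 \|w_0 - w_t\|_M^2$, which is the claim.

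I do not anticipate a genuine obstacle here — the argument is a routine adaptation of the classical SVRG variance bound to a non-Euclidean metric. The only point requiring a little care is making sure the "variance $\le$ second moment" step and the smoothness-equivalence are both stated for the $M$-weighted norms rather than the Euclidean one; both are legitimate because $M \succ 0$, and the equivalence of $L_f^M$-smoothness with the gradient-Lipschitz inequality in dual norms is precisely what the remark after Definition~\ref{def: smoothness} (citing Section~2 of \cite{Shalev-ShwartzZhang2016_accelerated}) provides. One subtlety worth a sentence: each $f_i$ is only assumed $L_f^M$-smooth (not convex), but the gradient-Lipschitz characterization $\|\nabla f_i(x)-\nabla f_i(y)\|_{M^{-1}} \le L_f^M\|x-y\|_M$ does not require convexity, so nonconvexity of the individual $f_i$ poses no difficulty.
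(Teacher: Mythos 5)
Your proof is correct and follows essentially the same route as the paper: the same rewriting of $\tilde{\nabla}_t-\nabla f(w_t)$ as a centered random variable $X-\E X$ with $X=\nabla f_{i_t}(w_t)-\nabla f_{i_t}(w_0)$, the same ``variance $\le$ second moment'' step in the $\|\cdot\|_{M^{-1}}$ norm (the paper phrases it via $\xi=M^{-1/2}X$ and the Euclidean identity $\E\|\xi-\E\xi\|^2=\E\|\xi\|^2-\|\E\xi\|^2$), and the same invocation of the gradient-Lipschitz form of $L_f^M$-smoothness. Nothing is missing.
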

\begin{proof}
We have
\begin{align*}
&\E[\|\tilde{\nabla}_t-\nabla f (w_t)\|_{M^{-1}}^2]\\
=&\E[\|\nabla f(w_0)+\nabla f_{i_t}(w_t)-\nabla f_{i_t}(w_0)-\nabla f(w_t)\|^2_{M^{-1}}]\\
=&\E[\|\big(\nabla f_{i_t}(w_t)-\nabla f_{i_t}(w_0)\big)-\big(\nabla f(w_t)-\nabla f(w_0)\big)\|_{M^{-1}}^2]\\
\leq& \E[\|\nabla f_{i_t}(w_t)-\nabla f_{i_t}(w_0)\|_{M^{-1}}^2]\\
\leq& (L^M_f)^2\|w_t-w_0\|_M^2,
\end{align*}
where in the first inequality, we have applied $\E[\|\xi-\E\xi\|^2]=\E[\|\xi\|^2-\|\E\xi\|^2$ with $\xi=M^{-\frac{1}{2}}\big(\nabla f_{i_t}(w_t)-\nabla f_{i_t}(w_0)\big)$, and in the second inequality follows from Assumption \ref{Assumption 1}.
\end{proof}

\begin{lem}[(Fact 2.3 of \cite{allen2018katyusha})]
\label{lem: lem 3 for thm 1}
Let $C_1, C_2,...$ be a sequence of numbers, and $N \sim $\textbf{Geom}$(p)$, then
\begin{enumerate}
   \item $\mathbb { E } _ { N } \left[ C _ { N } - C _ { N + 1 } \right] = \frac { p } { 1 - p } \mathbb { E }_N \left[ C _ { 0 } - C _ { N } \right]$, and      
   \item $\mathbb { E } _ { N } \left[ C _ { N } \right] = ( 1 - p ) \mathbb { E } \left[ C _ { N + 1 } \right] + p C _ { 0 }.$  
\end{enumerate}
\end{lem}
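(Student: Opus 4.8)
The plan is to prove both identities by direct manipulation of the geometric series, using only the defining probabilities $\Pr[N=k]=(1-p)^k p$ for $k\in\mathbb{N}$ (in particular $C_0$ is deterministic, so $\mathbb{E}_{N}[C_0]=C_0$). First I would write the two relevant expectations as explicit sums,
\[
\mathbb{E}_{N}[C_N]=\sum_{k=0}^{\infty}p(1-p)^k C_k,\qquad \mathbb{E}_{N}[C_{N+1}]=\sum_{k=0}^{\infty}p(1-p)^k C_{k+1}.
\]

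The single substantive step is an index shift $j=k+1$ in the second sum, which gives
\[
\mathbb{E}_{N}[C_{N+1}]=\sum_{j=1}^{\infty}p(1-p)^{j-1}C_j=\frac{1}{1-p}\sum_{j=1}^{\infty}p(1-p)^{j}C_j=\frac{1}{1-p}\bigl(\mathbb{E}_{N}[C_N]-pC_0\bigr),
\]
where the last equality simply re-inserts the missing $j=0$ term $pC_0$. Rearranging this one relation is exactly statement~(2): $\mathbb{E}_{N}[C_N]=(1-p)\mathbb{E}_{N}[C_{N+1}]+pC_0$. For statement~(1), I would combine the display above with the trivial identity $\mathbb{E}_{N}[C_N-C_{N+1}]=\mathbb{E}_{N}[C_N]-\mathbb{E}_{N}[C_{N+1}]$ to obtain $\mathbb{E}_{N}[C_N-C_{N+1}]=\mathbb{E}_{N}[C_N]-\frac{1}{1-p}\mathbb{E}_{N}[C_N]+\frac{p}{1-p}C_0=\frac{p}{1-p}\bigl(C_0-\mathbb{E}_{N}[C_N]\bigr)$, and then rewrite the right-hand side as $\frac{p}{1-p}\mathbb{E}_{N}[C_0-C_N]$ using $C_0=\mathbb{E}_{N}[C_0]$.

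There is no real obstacle here beyond index bookkeeping; the only point that deserves a word is that all the rearrangements of the infinite series are legitimate, which holds as soon as $\sum_k(1-p)^k|C_k|<\infty$. In every application of this lemma below, the $C_k$ are nonnegative quantities built from objective gaps and squared distances for which this condition holds, so absolute convergence is automatic. Alternatively, one may note that both identities are linear in the sequence $(C_k)_{k\ge 0}$, verify them directly on the finitely supported sequences $C_k=\mathbbm{1}[k=m]$ for $m\ge 0$ (a short computation, treating $m=0$ separately), and then pass to the general case by linearity together with absolute convergence.
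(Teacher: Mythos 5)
Your proof is correct. The paper itself does not prove this lemma---it is quoted verbatim as Fact~2.3 of \cite{allen2018katyusha} and invoked without verification---so there is no in-paper argument to compare against; your write-up simply supplies the missing elementary computation. The index shift $j=k+1$ yielding $\mathbb{E}_N[C_{N+1}]=\tfrac{1}{1-p}\bigl(\mathbb{E}_N[C_N]-pC_0\bigr)$ is exactly the one-line identity from which both items fall out, and you correctly observe that item~(1) is item~(2) rearranged, using $\mathbb{E}_N[C_0]=C_0$ since $C_0$ is deterministic. You also identify the only point that genuinely deserves care: the series rearrangement requires absolute summability $\sum_k(1-p)^k|C_k|<\infty$, which the paper (and its cited source) leave implicit; in every use of the lemma here the $C_k$ are built from squared $M$-norms of the inner iterates, so this reduces to a mild growth condition that holds under the stated step-size bounds. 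The alternative you sketch---verifying on $C_k=\mathbbm{1}[k=m]$ and extending by linearity---is a fine sanity check but adds nothing beyond the direct computation.
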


\begin{lem}
\label{lem: lem 4 for thm 1}
Under Assumption \ref{Assumption 1}, if $\eta\leq\min\{\frac{1-2c(p)a}{2L^M_f}, \frac{1}{2\sqrt{m}L^M_f}\}$ and $m\geq 2$, then, for any $u\in\Rd$ we have 
\begin{align*}
&\E[F(w_{D+1})-F(u)]\\
\leq&\E[-\frac{1}{4m\eta}\|w_{D+1}-w_0\|_M^2+\frac{\langle w_0-w_{D+1}, w_0-u\rangle_M}{m\eta}\\
&-(\frac{\sigma^M_f}{4}-\frac{c(p)}{2a\eta})\|w_{D+1}-u\|^2_M].
\end{align*}
\end{lem}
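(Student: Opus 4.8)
\emph{Proof proposal.} The plan is to distill Lemmas~\ref{lem: lem 1 for thm 1} and~\ref{lem: lem 2 for thm 1} into a single per-step descent inequality, then sum that inequality over one epoch $t=0,\dots,D$, take expectation over $D\sim\textbf{Geom}(1/m)$ using Lemma~\ref{lem: lem 3 for thm 1} to collapse the random-length sums, and finally read off the cross term from the telescoped squared distances while matching constants via $m\ge 2$ and the step-size bound.

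\emph{Per-step bound.} Since $\eta\le\frac{1-2c(p)a}{2L^M_f}$, Lemma~\ref{lem: lem 1 for thm 1} applies with the given $a$; combining it with Lemma~\ref{lem: lem 2 for thm 1} (which bounds $\eta\,\E\|\tilde{\nabla}_t-\nabla f(w_t)\|_{M^{-1}}^2\le\eta(L^M_f)^2\|w_t-w_0\|_M^2$) and then using $\eta\le\frac{1}{2\sqrt m L^M_f}$ to replace $\eta(L^M_f)^2$ by the larger $\frac{1}{4m\eta}$ gives, for every fixed $t$ and every $u$,
\begin{align*}
\E[F(w_{t+1})-F(u)]&\le\E\Big[\frac{1}{4m\eta}\|w_t-w_0\|_M^2+\frac{1}{2\eta}\big(\|w_t-u\|_M^2-\|w_{t+1}-u\|_M^2\big)\\
&\quad-\frac{\sigma^M_f}{2}\|w_t-u\|_M^2+\frac{c(p)}{2a\eta}\|w_{t+1}-u\|_M^2\Big].
\end{align*}

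\emph{Summation over the epoch.} Summing over $t=0,\dots,D$ and taking expectation over $D$ (which is drawn independently of the sampled indices $i_t$), the $\frac{1}{2\eta}$-term telescopes to $\frac{1}{2\eta}\big(\|w_0-u\|_M^2-\|w_{D+1}-u\|_M^2\big)$, and for the remaining sums I would use the consequence of Lemma~\ref{lem: lem 3 for thm 1} that $\E\big[\sum_{t=0}^{D}a_t\big]=m\,\E[a_D]$ for any iterate-dependent sequence $(a_t)$ — applied with $a_t$ taken in turn to be $F(w_{t+1})-F(u)$, $\|w_t-w_0\|_M^2$, $\|w_t-u\|_M^2$, and $\|w_{t+1}-u\|_M^2$. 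Dividing by $m$ turns the left side into $\E[F(w_{D+1})-F(u)]$ and leaves on the right $\frac{1}{4m\eta}\E\|w_D-w_0\|_M^2+\frac{1}{2m\eta}\E\big(\|w_0-u\|_M^2-\|w_{D+1}-u\|_M^2\big)-\frac{\sigma^M_f}{2}\E\|w_D-u\|_M^2+\frac{c(p)}{2a\eta}\E\|w_{D+1}-u\|_M^2$.

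\emph{Clean-up and obstacle.} Part~2 of Lemma~\ref{lem: lem 3 for thm 1} gives $\E\|w_D-w_0\|_M^2=(1-\frac1m)\E\|w_{D+1}-w_0\|_M^2\le\E\|w_{D+1}-w_0\|_M^2$ and $\E\|w_D-u\|_M^2=(1-\frac1m)\E\|w_{D+1}-u\|_M^2+\frac1m\|w_0-u\|_M^2$; the telescoped term is rewritten with the identity $\|w_0-u\|_M^2-\|w_{D+1}-u\|_M^2=2\langle w_0-w_{D+1},w_0-u\rangle_M-\|w_{D+1}-w_0\|_M^2$, which produces the cross term $\frac{1}{m\eta}\langle w_0-w_{D+1},w_0-u\rangle_M$ together with $-\frac{1}{2m\eta}\|w_{D+1}-w_0\|_M^2$; the latter combines with the $+\frac{1}{4m\eta}\|w_{D+1}-w_0\|_M^2$ coming from the variance term to yield exactly $-\frac{1}{4m\eta}\|w_{D+1}-w_0\|_M^2$. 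Using $m\ge2$ to get $-\frac{\sigma^M_f}{2}(1-\frac1m)\le-\frac{\sigma^M_f}{4}$ and discarding the nonpositive leftover $-\frac{\sigma^M_f}{2m}\|w_0-u\|_M^2$ produces the stated bound. The step needing the most care is the bookkeeping with Lemma~\ref{lem: lem 3 for thm 1}: keeping the index shift straight (so that $\sum_{t=0}^D\|w_{t+1}-u\|_M^2$ collapses to $m\,\E\|w_{D+1}-u\|_M^2$ whereas $\sum_{t=0}^D\|w_t-u\|_M^2$ collapses to $m\,\E\|w_D-u\|_M^2$), and justifying the interchange of the sum with the two expectations via independence of $D$ from the $i_t$; everything else is the $\|\cdot\|_M$-identity above and matching constants under the two step-size conditions.
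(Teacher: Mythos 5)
Your proposal is correct and is essentially the paper's own proof: the same per-step bound assembled from \Cref{lem: lem 1 for thm 1} and \Cref{lem: lem 2 for thm 1}, the same geometric-distribution facts from \Cref{lem: lem 3 for thm 1}, and the identical endgame (the identity $\|w_0-u\|_M^2-\|w_{D+1}-u\|_M^2=2\langle w_0-w_{D+1},w_0-u\rangle_M-\|w_{D+1}-w_0\|_M^2$, the use of $m\ge 2$, and $\eta (L^M_f)^2\le \frac{1}{4m\eta}$). The only cosmetic difference is that you sum the per-step inequality over $t=0,\dots,D$ and collapse the sums via $\E\big[\sum_{t=0}^{D}a_t\big]=m\,\E[a_D]$ (itself a consequence of item~1 of \Cref{lem: lem 3 for thm 1}), whereas the paper substitutes $t=D$ directly and applies items~1 and~2 to shift indices; both manipulations land on the same intermediate inequality.
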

\begin{proof}
By Lemmas \ref{lem: lem 1 for thm 1} and \ref{lem: lem 2 for thm 1}, we know that 
\begin{align*}
&\mathbb{E}[F(w_{t+1})-F(u)]\\
&  \mathbb{E}[ \eta (L^M_f)^2\|w_0-w_t\|^2_{M}+\frac{1-\eta\sigma^M_f}{2\eta}\|u-w_t\|_M^2\\
&-(\frac{1}{2\eta}-\frac{c(p)}{2\eta a})\|u-w_{t+1}\|_M^2].
\end{align*}
Let $D \sim$ \textbf{Geom}$(\frac{1}{m})$ as in Algorithm \ref{alg: inexact preconditioned svrg} and take $t=D$, then 
\begin{align*}
&\mathbb{E}[F(w_{D+1})-F(u)]\\
\leq&  \mathbb{E}[ \eta (L^M_f)^2\|w_0-w_D\|^2_{M}+\frac{1}{2\eta}\|u-w_D\|_M^2\\
&-\frac{1}{2\eta}\|u-w_{D+1}\|_M^2-\frac{\sigma^M_f}{2}\|u-w_D\|^2_M\\
&+\frac{c(p)}{2\eta a}\|u-w_{D+1}\|_M^2]\\
=&  \mathbb { E } [ \eta (L^M_f)^2 \| w _ { D } - w _ { 0 } \|_M ^ { 2 } + \frac { \| u - w _ { 0 } \|_M ^ { 2 } - \| u - w _ { D } \|_M ^ { 2 } } { 2 ( m - 1 ) \eta } \\
&- \frac { \sigma_f^M } { 2 } \| u - w _ { D } \|_M ^ { 2 } +\frac{c(p)}{2a\eta}\|u-w_{D+1}\|_M^2]  \\  
=&\mathbb { E } [ \frac { m - 1 } { m } \eta (L_f^M)^2 \| w _ { D + 1 } - w _ { 0 } \|_M ^ { 2 }\\
&+ \frac { \| u - w _ { 0 } \|_M ^ { 2 } - \| u - w _ { D + 1 } \|_M ^ { 2 } } { 2 m \eta } ]\\  
& - \frac { \sigma_f^M } { 2 m } \| u - w _ { 0 } \|_M ^ { 2 } - \frac { \sigma_f^M ( m - 1 ) } { 2 m } \| u - w _ { D + 1 } \|_M ^ { 2 }\\
&+ \frac{c(p)}{2a\eta}\|u-w_{D+1}\|_M^2]  \\ 
\leq&\mathbb { E } [ \eta (L_f^M)^2 \| w _ { D + 1 } - w _ { 0 } \|_M ^ { 2 } + \frac { \| u - w _ { 0 } \|_M ^ { 2 } - \| u - w _ { D + 1 } \|_M ^ { 2 } } { 2 m \eta } \\
&- \frac { \sigma_f^M } { 4 } \| u - w _ { D + 1 } \|_M ^ { 2 }+ \frac{c(p)}{2a\eta}\|u-w_{D+1}\|_M^2]  \\  
\leq&\mathbb { E } [ - \frac { 1 } { 4 m \eta } \left\| w _ { 0 } - w _ { D + 1 } \right\|_M ^ { 2 } \\
&+ \frac { \left\| u - w _ { 0 } \right\|_M ^ { 2 } - \left\| u - w _ { D + 1 } \right\|_M ^ { 2 } + \left\| w _ { 0 } - w _ { D + 1 } \right\|_M ^ { 2 } } { 2 m \eta }\\
&- \frac { \sigma^M _ { f }} { 4 } \left\| w _ { D + 1 } - u \right\|_M ^ { 2 }+ \frac{c(p)}{2a\eta}\|u-w_{D+1}\|_M^2]  \\  
=&\mathbb { E } [ - \frac { 1 } { 4 m \eta } \| w _ { D + 1 } - w _ { 0 } \|_M ^ { 2 } + \frac { \langle w _ { 0 } - w _ { D + 1 } , w _ { 0 } - u \rangle_M } { m \eta }\\
&- (\frac { \sigma_f^M } { 4 }-\frac{c(p)}{2a\eta}) \| w _ { D + 1 } - u \|_M ^ { 2 } ],
\end{align*}
where the first equality follows from the item 1 of Lemma \ref{lem: lem 3 for thm 1} with $C_N=\|u-w_N\|_M^2$, the second inequality follows from item 2 with $C_N=\|w_d-w_0\|_M^2$, item 2 with $C_N=\|u-w_0\|_M^2-\|u-w_N\|_M^2$, and item 1 with $C_N=\|u-w_D\|_M^2$, then third inequality makes use of $m\geq 2$ and the fourth inequality makes use of $\eta\leq \frac{1}{2\sqrt{m}L^M_f}$.

\end{proof}
Now, let us proceed to prove Theorem \ref{thm: convergence of inexact preconditioned svrg}. With Lemma \ref{lem: lem 4 for thm 1}, it can be proved in a similar way as Theorem 3 of \cite{hannah2018breaking}.
\begin{proof}[Proof of Theorem \ref{thm: convergence of inexact preconditioned svrg}]
Without loss of generality, we can assume $x^{\star}=\argmin_{x\in\Rd} F(x)=\mathbf{0}$ and $F(x^*)=0.$

According to Lemma \ref{lem: lem 4 for thm 1}, for any $u\in\R^d$, and $\eta\leq\min\{\frac{1-2c(p)a}{2L^M_f}, \frac{1}{2\sqrt{m}L^M_f}\}$ we have
\begin{align*}
&\mathbb{E}[F(x^{j+1})-F(u)]\\
\leq& \mathbb{E}[-\frac{1}{4m\eta}\|x^{j+1}-x^j\|_M^2\\
&+\frac{\langle x^j-x^{j+1}, x^j-u\rangle_M}{m\eta}-(\frac{\sigma^M_f}{4}-\frac{c(p)}{2a\eta})\|x^{j+1}-u\|_M^2],
\end{align*}
or equivalently,
\begin{align*}
&\mathbb{E}[F(x^{j+1})-F(u)]\\
\leq& \mathbb{E}[\frac{1}{4m\eta}\|x^{j+1}-x^j\|_M^2+\frac{1}{2m\eta}\|x^j-u\|_M^2\\
&-\frac{1}{2m\eta}\|x^{j+1}-u\|_M^2-(\frac{\sigma_f^M}{4}-\frac{c(p)}{2a\eta})\|x^{j+1}-u\|_M^2].
\end{align*}
In the following proof, we will omit $\E$.

Setting $u=x^*=0$ and $u=x^j$ yields the following two inequalities:
\begin{align}
    F(x^{j+1})&\leq \frac{1}{4m\eta}(\|x^{j+1}-x^j\|_M^2+2\|x^j\|_M^2)\nonumber\\
    &-\frac{1}{2m\eta}\big(1+\frac{1}{2}m\eta(\sigma_f^M-\frac{2c(p)}{a\eta})\big)\|x^{j+1}\|_M^2,\label{1}
    \end{align}
    \begin{align}
    &F(x^{j+1})-F(x^j)\\
    \leq&-\frac{1}{4m\eta}\big(1+m\eta(\sigma_f^M-\frac{2c(p)}{a\eta})\big)\|x^{j+1}-x^j\|_M^2.\label{2}
\end{align}
Define $\tau=\frac{1}{2}m\eta(\sigma_f^M-\frac{2c(p)}{a\eta})$, multiply $(1+2\tau)$ to \eqref{1}, then add it to \eqref{2} yields
\begin{align*}
&2(1+\tau)F(x^{j+1})-F(x^j)\\
\leq& \frac{1}{2m\eta}(1+2\tau)\big(\|x^j\|_M^2-(1+\tau)\|x^{j+1}\|_M^2\big).
\end{align*}
Multiplying both sides by $(1+\tau)^j$ gives
\begin{align*}
&2(1+\tau)^{j+1}F(x^{j+1})-(1+\tau)^j F(x^j)\\
\leq& \frac{1}{2m\eta}(1+2\tau)\big((1+\tau)^j\|x^j\|_M^2-(1+\tau)^{j+1}\|x^{j+1}\|^2_M\big).
\end{align*}
Summing over $j=0,1,...,k-1$, we have
\begin{align*}
&(1+\tau)^k F(x^k)+\sum_{j=0}^{k-1}(1+\tau)^j F(x^j)-F(x^0)\\
\leq& \frac{1}{2m\eta}(1+2\tau)(\|x^0\|_M^2-(1+\tau)^k\|x^k\|_M^2).
\end{align*}
Since $F(x^j)\geq 0$, we have
\[
F({x}^k)(1+\tau)^k \leq F(x^0)+\frac{1}{2m\eta}(1+2\tau)\|x^0\|^2.
\]
By the strong convexity of $F$, we have $F(x^0)\geq \frac{\sigma_f^M}{2}\|x^0\|_M^2$, therefore
\begin{align}
\label{equ: thm 1 0}
F({x}^k)(1+\tau)^k\leq F(x^0)(2+\frac{1}{2\tau}).
\end{align}
Finally, recall that $a>0$ can be chosen arbitrarily, so we can take 
$$a=\frac{4c(p)}{\eta\sigma^M_f},$$ 
and 
\begin{align}
    \eta\leq&\min\{\frac{1-2c(p)a}{2L^M_f}, \frac{1}{2\sqrt{m}L^M_f}\}\nonumber\\
    =&\min\{\frac{1-\frac{8c^2(p)}{\eta\sigma^M_f}}{2L^M_f}, \frac{1}{2\sqrt{m}L^M_f}\},\label{equ: thm 1 1}
\end{align}
\[
\tau=\frac{1}{2}m\eta(\sigma_f^M-\frac{2c(p)}{a\eta})=\frac{1}{4}m\eta\sigma^M_f.
\]
In order for the choice of $\eta$ in \eqref{equ: thm 1 1} to be possible,
we need
\begin{align}
\label{equ: thm 1 2}
2L^M_f\eta^2-\eta+8\frac{c^2(p)}{\sigma^M_f}\leq 0
\end{align}
to have one solution at least, which requires
\[
64\kappa^M_f c^2(p)\leq 1,
\]
under which $\eta=\frac{1}{4L^M_f}$ satisfy \eqref{equ: thm 1 2}.
As a result, $m\geq 4$ makes \eqref{equ: thm 1 1} into
\[
\eta\leq \frac{1}{2\sqrt{m}L^M_f},
\]
and the desired convergence result follows from \eqref{equ: thm 1 0}.
\end{proof}

\section{Proof of Lemma \ref{lem: choice of p with FISTA with restart}}
\label{app: Proof of choice of p with FISTA with researt}
\begin{proof}
From Lemma \ref{lem: finite loops of FISTA with restart}, we know that 
\[
c(p)=14\kappa(M)\frac{\tau^p}{1-\tau^p},
\]
where
\[
\tau\leq \exp(-\frac{1}{2e\sqrt{\kappa(M)}+1}).
\]
Therefore, in order for $64\kappa^M_f c^2(p)\leq 1$, we need
\[
\kappa^M_f\kappa^2(M)(\frac{\tau^p}{1-\tau^p})^2\leq \frac{1}{64\times 14^2}=c_1,
\]
which is equivalent to
\[
\tau^p\leq \frac{c_1}{\sqrt{\kappa^M_f}\kappa(M)+\sqrt{c_1}}.
\]
Thus, it suffices to require that
\[
[\exp(-\frac{1}{2e\sqrt{\kappa(M)}+1})]^p\leq \frac{c}{\sqrt{\kappa^M_f}\kappa(M)+\sqrt{c_1}},
\]
which gives 
\[
p\geq (2e\sqrt{\kappa(M)}+1)\ln\frac{\sqrt{\kappa^M_f}\kappa(M)+\sqrt{c_1}}{c_1}.
\]
\end{proof}

\section{Proof of Theorem \ref{thm: convergence of inexact preconditioned Katyusha X}}
\label{app: convergence of inexact preconditioned Katyusha X}

The proof of Theorem \ref{thm: convergence of inexact preconditioned Katyusha X} is similar to that of Theorem 4.3 of \cite{allen2018katyusha}, so we provide a proof sketch here and omit the details.
\begin{enumerate}
    \item In \cite{allen2018katyusha}, the proof of Theorem 4.3 is based on Lemma 3.3, here the proof of Theorem \ref{thm: convergence of inexact preconditioned Katyusha X} is based on Lemma \ref{lem: lem 4 for thm 1}, which is an analog of Lemma of 3.3 in our settings.
    \item Based on Lemma \ref{lem: lem 4 for thm 1}, the proof of Theorem \ref{thm: convergence of inexact preconditioned Katyusha X} follows in nearly the same way as Theorem 4.3 of \cite{allen2018katyusha}, the only difference is that one needs to replace $\sigma$ by $\sigma^M_f-\frac{2c(p)}{a\eta}$. 
    
    \item By setting
    $$a=\frac{4c(p)}{\eta\sigma^M_f},$$ 
    and
    \[
    64\kappa^M_f c^2(p)\leq 1
    \]
    as in the proof of Theorem \ref{thm: convergence of inexact preconditioned svrg},
    the $\tau$ in Theorem 4.3 of \cite{allen2018katyusha} becomes $\frac{1}{2}m\eta\sigma^{M}_f$, and the convergence result of Theorem \ref{thm: convergence of inexact preconditioned Katyusha X} follows.
\end{enumerate}

\section{Proof of Theorems \ref{thm: inexact preconditioned svrg vs svrg} and \ref{thm: inexact preconditioned Katyusha X vs Katyusha X}}

\begin{proof}[Proof of Theorem \ref{thm: inexact preconditioned svrg vs svrg}]
From Remark \ref{rem: M=I gives previous results}, we know that the gradient complexity of SVRG can be expressed as 
\[
C_1(m, \varepsilon)=\cO(\frac{n+m}{\ln(1+\frac{1}{4}m\eta \sigma_f)}\ln{\frac{1}{\varepsilon}}).
\]
Taking the largest possible step size $\eta=\frac{1}{2\sqrt{m}L_f}$ as in Theorem \ref{thm: convergence of inexact preconditioned svrg}, we have
\[
C_1(m, \varepsilon)=\cO(\frac{n+m}{\ln(1+\frac{\sqrt{m}}{8\kappa_f})}\ln{\frac{1}{\varepsilon}}).
\]
Let us first find the optimal $m=m^{\star}$ for SVRG, let 
\[
g(m) = \frac{n+m}{\ln(1+\frac{\sqrt{m}}{8\kappa_f})},
\]
then 
\[
g'(m)=\frac{\ln(1+\frac{\sqrt{m}}{8\kappa_f})-\frac{\frac{\sqrt{m}}{8\kappa_f}}{1+\frac{\sqrt{m}}{8\kappa_f}}\frac{n+m}{2m}}{\ln^2(1+z)}.
\]
Taking derivative to the numerator gives
\begin{align*}
&[\ln(1+\frac{\sqrt{m}}{8\kappa_f})-\frac{\frac{\sqrt{m}}{8\kappa_f}}{1+\frac{\sqrt{m}}{8\kappa_f}}\frac{n+m}{2m}]'\\
&=(n+m)\frac{\frac{1}{32\kappa_f}m^{-\frac{3}{2}}+2\frac{m^{-1}}{(16\kappa_f)^2}}{(1+\frac{\sqrt{m}}{8\kappa_f})^2}>0,
\end{align*}
Therefore, $m^{\star}$ is given by $g'(m)=0$. Let $z=\frac{\sqrt{m}}{8\kappa_f}>0$, then
\[
g'(m)=\frac{\ln(1+z)-\frac{z}{1+z}\frac{n+m}{2m}}{\ln^2(1+z)}.
\]
Since $\ln(1+z)> \frac{z}{1+z}$ for $z> 0$, we know that $g'(n)>0$, therefore, $m^{\star}<n$.

Let $m=n^s$ where $0<s<1$, we would like to have $g'(n^s)<0$, i,e., 
\[
\frac{\ln(1+z)}{\frac{z}{1+z}}<\frac{1+n^{1-s}}{2}.
\]
so that $m^{\star}\in(n^s, n)$. 

Since $\kappa_f>n^{\frac{1}{2}}$, we have $z=\frac{\sqrt{m}}{8\kappa_f}<\frac{1}{8}$, on the other hand, we have
\[
[\frac{\ln(1+z)}{\frac{z}{1+z}}<\frac{1+n^{1-s}}{2}]'_z>0.
\]
Therefore, it suffices to have 
\[
n^{1-s}>18\ln\frac{9}{8}-1 \coloneqq c_0>1.
\]
As a result, we have $m^{\star}\in(\frac{n}{c_0}, n)$, and 
\begin{align*}
C_1(m^{\star}, \varepsilon)&=\cO(\frac{n+m^{\star}}{\ln(1+\frac{\sqrt{m^{\star}}}{8\kappa_f})}\ln{\frac{1}{\varepsilon}})\\
&=\cO(\frac{n}{\frac{\sqrt{n}}{8\kappa_f}}\ln{\frac{1}{\varepsilon}})=\cO(\kappa_f\sqrt{n}\ln{\frac{1}{\varepsilon}}),
\end{align*}
where in the second equality we have used $\kappa_f>n^{\frac{1}{2}}$.

For our iPreSVRG in Algorithm \ref{alg: inexact preconditioned svrg}, we have
\[
C'_1(m, \varepsilon)=\cO(\frac{n+(1+pd)m}{\ln(1+\frac{1}{4}m\eta \sigma^M)}\ln{\frac{1}{\varepsilon}}),
\]
thanks to Lemma \ref{lem: choice of p with FISTA with restart}, $p$ can be chosen as
\[
p=\cO(\sqrt{\kappa(M)}\ln\big(\sqrt{\kappa^M_f}\kappa(M)\big),
\]
furthermore, we can take $\eta=\frac{1}{2\sqrt{m}L_f}$ due to Theorem \ref{thm: convergence of inexact preconditioned svrg}.

Under these settings, we have
\[
C'_1(m, \varepsilon)=\cO(\frac{n+(1+pd)m}{\ln(1+\frac{1}{8}\frac{\sqrt{m}}{\kappa^M_f})}\ln{\frac{1}{\varepsilon}}).
\]
Let us take $m=m'=\lceil\frac{n}{1+pd}\rceil$. 

If $n>1+pd$, or equivalently $\kappa_f<n^2d^{-2}$, then
\[
C'_1(m', \varepsilon)=\cO(\frac{n}{\ln(1+\frac{1}{8}\frac{\sqrt{n}}{\sqrt{pd}\kappa^M_f})}\ln{\frac{1}{\varepsilon}}).
\]
Since $p=\cO\bigg(\sqrt{\kappa(M)}\ln\big(\sqrt{\kappa^M_f}\kappa(M)\big)\bigg),$ we know that when $(\kappa^M_f)^2\sqrt{\kappa(M)}d < n$, or equivalently $\kappa_f<n^2d^{-2}$, we have
\[
\ln(1+\frac{1}{8}\frac{\sqrt{n}}{\sqrt{pd}\kappa^M_f})=\cO(\ln n), 
\]
therefore
\[
C'_1(m', \varepsilon)=\cO(n\ln{\frac{1}{\varepsilon}}),
\]
and
\[
\frac{\min_{m\geq 1}C_1'(m,\varepsilon)}{\min_{m\geq 1}C_1(m,\varepsilon)}\leq \frac{C'_1(m', \varepsilon)}{C_1(m^{\star}, \varepsilon)} =\cO(\frac{\sqrt{n}}{\kappa_f}).
\]


If $n\leq 1+pd$, or equivalently $\kappa_f>n^2d^{-2}$, then $m=1$ and
\[
C'_1(m, \varepsilon)=\cO(\frac{\sqrt{\kappa(M)}d}{\ln(1+\frac{1}{8}\frac{1}{\kappa^M_f})}\ln{\frac{1}{\varepsilon}}),
\]
therefore
\[
\frac{\min_{m\geq 1}C_1'(m,\varepsilon)}{\min_{m\geq 1}C_1(m,\varepsilon)}\leq \frac{C'_1(1, \varepsilon)}{C_1(m^{\star}, \varepsilon)} =\cO(\frac{\sqrt{\kappa(M)}d}{\kappa_f\sqrt{n}\ln(1+\frac{1}{8}\frac{1}{\kappa^M_f})}).
\]
Since $\kappa(M)\approx \kappa_f\gg \kappa^M_f$, this ratio becomes $\cO(\frac{d}{\sqrt{n\kappa_f}})$
\end{proof}

\begin{proof}[Proof of Theorem \ref{thm: inexact preconditioned Katyusha X vs Katyusha X}]
The proof of Theorem \ref{thm: inexact preconditioned Katyusha X vs Katyusha X} is similar and is omitted.
\end{proof}

\end{document}